\documentclass[a4paper,12pt]{article}

\usepackage[intlimits]{amsmath}
\usepackage{amssymb}
\usepackage{amsthm}
\usepackage{mathrsfs}
\usepackage{cite}
\usepackage{bm}
\usepackage[colorlinks=true,linkcolor=blue,citecolor=blue,urlcolor=blue]{hyperref}

\newcommand{\la}{\langle}
\newcommand{\ra}{\rangle}
\newcommand{\pr}{\partial}
\newcommand{\dom}{\Omega}

\newcommand{\R}{\mathbb{R}}
\newcommand{\C}{\mathbb{C}}

\newcommand{\Ord}{\mathscr{O}}
\newcommand{\dd}{\,\mathrm{d}}

\newtheorem{thm}{Theorem}[section]
\newtheorem{prop}[thm]{Proposition}
\newtheorem{lem}[thm]{Lemma}
\newtheorem{cor}[thm]{Corollary}
\theoremstyle{definition}
\newtheorem{dfn}[thm]{Definition}
\theoremstyle{remark}
\newtheorem{rem}[thm]{Remark}

\title{Inverse boundary value problems for  certain doubly nonlinear parabolic and elliptic equations}
\author{%
C\u{a}t\u{a}lin I. C\^{a}rstea\thanks{
Department of Applied Mathematics,
National Yang Ming Chiao Tung University,
Hsinchu 300, Taiwan, R.O.C.,
\texttt{catalin.carstea@gmail.com}}
\and
Tuhin Ghosh\thanks{
Harish-Chandra Research Institute, Homi Bhabha National Institute,
Chhatnag Road, Jhunsi, Prayagraj (Allahabad) 211 019, India,
\texttt{tuhinghosh@hri.res.in}}}
\date{}

\begin{document}
\maketitle

\begin{abstract}
We consider an inverse boundary value problem for the doubly nonlinear parabolic equation
\[
\epsilon(x)\pr_t u^m-\nabla\cdot\bigl(\gamma(x)|\nabla u|^{p-2}\nabla u\bigr)=0
\quad\text{in }(0,T)\times\Omega,
\]
where $p\in(1,\infty)\setminus\{2\}$, $m>0$, and the coefficients $\epsilon$ and $\gamma$ are positive. Our first main result shows that when $m>p-1$, the lateral Cauchy data determine both coefficients. The proof proceeds by reducing the parabolic inverse problem to an inverse problem for the nonlinear elliptic equation
\[
-\nabla\cdot\bigl(\gamma|\nabla w|^{p-2}\nabla w\bigr)+Vw^m=0
\quad\text{in }\Omega.
\]
Our second main result establishes uniqueness for the pair $(\gamma,V)$ from the nonlinear Dirichlet-to-Neumann map of this elliptic equation. The argument has two steps. First, asymptotic expansions of the elliptic Dirichlet-to-Neumann map recover the weighted $p$-Laplacian Dirichlet-to-Neumann map, and and from it the coefficient $\gamma$. Second, once $\gamma$ is known, linearization at a noncritical background solution yields recovery of $V$. In dimension two we work under a simply connectedness assumption on the domain, while in dimensions $n\ge 3$ we assume that the conductivity is invariant in one known direction.
\end{abstract}

\section{Introduction}

Let $\Omega\subset\R^n$ be a smooth bounded domain, let $T>0$, and let $p\in(1,\infty)\setminus\{2\}$, $m>0$. In this paper we study the inverse problem for the doubly nonlinear parabolic equation
\begin{equation}\label{eq:parabolic}
\epsilon(x)\pr_t u^m(t,x)-\nabla\cdot\bigl(\gamma(x)|\nabla u|^{p-2}\nabla u\bigr)(t,x)=0
\qquad \text{in }Q_T:=(0,T)\times\Omega,
\end{equation}
with zero initial data and prescribed lateral Dirichlet data on $S_T:=(0,T)\times\pr\Omega$. Throughout we assume that
\begin{equation}\label{eq:coeff-bounds}
0<\mu^{-1}\le \epsilon(x),\gamma(x)\le \mu<\infty,
\qquad x\in\Omega,
\end{equation}
for some $\mu>1$, and, for the sake of simplicity, that $\epsilon,\gamma\in C^\infty(\Omega)$. 

Equations of the form \eqref{eq:parabolic} belong to the class of doubly nonlinear parabolic equations. This family includes, as special cases or limiting regimes, the porous medium equation and the parabolic $p$-Laplace equation, but it also displays phenomena that are specific to the simultaneous nonlinearity in the time derivative and the diffusion flux. From the analytical point of view, such equations provide a natural setting for degenerate and singular diffusion, intrinsic-scaling regularity theory, smoothing effects, and long-time asymptotic analysis; see, for instance, \cite{PorVes,BoDuMaSche-ARMA,BonGri,StanVaz}. From the modeling point of view, doubly nonlinear parabolic equations arise in nonlinear filtration for non-Newtonian fluids \cite{OtSu}, in phase-transition models written in enthalpy variables \cite{Visintin}, in glacier and ice-sheet dynamics \cite{GreBl}, and in reaction--diffusion problems with doubly nonlinear diffusion \cite{AudVaz}. 

We formulate the parabolic inverse problem in terms of the \emph{lateral Cauchy data set}
\begin{equation}\label{eq:intro-cauchy}
\mathcal C^{\mathrm{lat}}_{\epsilon,\gamma}
:=\left\{\left(f,\,\gamma|\nabla u|^{p-2}\partial_\nu u\big|_{S_T}\right):
\begin{array}{l}
 u \text{ is a weak solution of \eqref{eq:parabolic} in }Q_T,\\
 u(0,\cdot)=0,\ \ u\big|_{S_T}=f
\end{array}
\right\}.
\end{equation}
 We do not need, for the purposes of this paper, a general forward theory for arbitrary lateral Dirichlet data. Instead, the proof will use a distinguished class of variable separated boundary traces, for which existence is obtained later from an associated elliptic problem, while uniqueness follows from a comparison principle; see Sections~\ref{sec:parabolic} and \ref{sec:reduction}.

The key observation is that when $m>p-1$, the variable separaton Ansatz
\[
u(t,x)=t^\alpha w(x),\qquad \alpha=\frac{1}{m-p+1},\]
reduces \eqref{eq:parabolic} to the nonlinear elliptic equation
\begin{equation}\label{eq:elliptic-intro}
-\nabla\cdot\bigl(\gamma|\nabla w|^{p-2}\nabla w\bigr)+Vw^m=0
\qquad \text{in }\Omega,
\end{equation}
with
\begin{equation}\label{eq:V-from-epsilon}
V=\frac{m}{m-p+1}\,\epsilon.
\end{equation}
For \eqref{eq:elliptic-intro}, and for each strictly positive boundary value $g$, we write $w_g$ for the corresponding weak solution and define the associated nonlinear Dirichlet-to-Neumann map by
\begin{equation}\label{eq:intro-dn}
\langle \Lambda_{\gamma,V}(g),h\rangle
:=\int_\Omega \gamma|\nabla w_g|^{p-2}\nabla w_g\cdot\nabla \widetilde h\,dx
+\int_\Omega Vw_g^m\widetilde h\,dx,
\end{equation}
for $h\in C^\infty(\pr\Omega)$ and any extension $\widetilde h\in C^\infty(\overline\Omega)$ with $\widetilde h|_{\pr\Omega}=h$. The forward elliptic results needed to justify \eqref{eq:intro-dn} are established in Section~\ref{sec:elliptic-forward}.

Inverse problems for nonlinear equations have been studied extensively in recent years. For semilinear elliptic and parabolic equations, see for example \cite{Is1,IsSy,IsNa,FeOk,KrUh,KrUh2,LaLiiLinSa1,LaLiiLinSa2,Sun2}. For quasilinear elliptic equations and related nonlinear conductivity problems, we refer to  \cite{HeSun,Sun1,SunUh,Sun3,KaNa,EgPiSc,CarNaVa,CarFeKiKrUh,MuUh,Sh,CarFe1,CarFe2, Car2}. For inverse problems involving the $p$-Laplacian, the weighted $p$-Laplacian, and degenerate or singular nonlinear diffusion equations, see \cite{Br,BrKaSa,BrHaKaSa,BrIlKa,GuKaSa,KaWa,SaZh,CarKa,CarGhNa,CarGhUh,CarFe3, CarZim}. Most of these works use second- or higher-order linearization, a method introduced for semilinear parabolic equations in \cite{Is1}. The techniques we use in this paper are related to this method, but more closely resemble asymptotic techniques developed in \cite{CarKa, CarFe3, Car2, CarZim}.

Our first main result is the following uniqueness theorem for the parabolic lateral Cauchy data set.

\begin{thm}\label{thm:main-parabolic}
Let $m>p-1$, and let $\epsilon,\gamma,\tilde\epsilon,\tilde\gamma\in C^\infty(\Omega)$ be strictly positive coefficients satisfying \eqref{eq:coeff-bounds}. Assume that
\[
\mathcal C^{\mathrm{lat}}_{\epsilon,\gamma}=\mathcal C^{\mathrm{lat}}_{\tilde\epsilon,\tilde\gamma}.
\]
Then the following assertions hold.
\begin{enumerate}
\item If $n=2$ and $\Omega$ is simply connected, then
\[
\epsilon=\tilde\epsilon,\qquad \gamma=\tilde\gamma\qquad \text{in }\Omega.
\]
\item If $n\ge 3$ and there exists a nonzero vector $\xi\in\R^n$ such that
\[
\xi\cdot\nabla\gamma=\xi\cdot\nabla\tilde\gamma=0\qquad \text{in }\Omega,
\]
then
\[
\epsilon=\tilde\epsilon,\qquad \gamma=\tilde\gamma\qquad \text{in }\Omega.
\]
\end{enumerate}
\end{thm}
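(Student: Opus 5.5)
The plan is to reduce Theorem~\ref{thm:main-parabolic} to the elliptic uniqueness result announced in the introduction (the second main result, for $\Lambda_{\gamma,V}$), via the separation of variables Ansatz. Set $\alpha=1/(m-p+1)>0$. For a strictly positive $g\in C^\infty(\pr\Omega)$, let $w_g$ be the positive weak solution of \eqref{eq:elliptic-intro} with $V=\frac{m}{m-p+1}\epsilon$ and $w_g|_{\pr\Omega}=g$; its existence is provided by the forward theory of Section~\ref{sec:elliptic-forward}. Then $u(t,x)=t^\alpha w_g(x)$ is a weak solution of \eqref{eq:parabolic} with $u(0,\cdot)=0$ and $u|_{S_T}=t^\alpha g$. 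This is a direct computation: $\pr_t u^m=m\alpha t^{m\alpha-1}w^m$, the flux term scales as $t^{\alpha(p-1)}$, and $m\alpha-1=\alpha(p-1)$ exactly when $\alpha(m-p+1)=1$. Since $\alpha>0$ and $w_g$ is bounded, $u$ is continuous up to $t=0$ and has the right integrability, so it is an admissible weak solution.

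The comparison principle from Section~\ref{sec:parabolic} shows that this $u$ is the unique weak solution with data $(0,t^\alpha g)$. Hence the element of $\mathcal C^{\mathrm{lat}}_{\epsilon,\gamma}$ with first component $f=t^\alpha g$ is
\[
\bigl(t^\alpha g,\ t^{\alpha(p-1)}\gamma|\nabla w_g|^{p-2}\pr_\nu w_g|_{\pr\Omega}\bigr).
\]

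Now assume $\mathcal C^{\mathrm{lat}}_{\epsilon,\gamma}=\mathcal C^{\mathrm{lat}}_{\tilde\epsilon,\tilde\gamma}$. Take $f=t^\alpha g$. Both problems have this $f$ in their data sets, and each has a unique solution for it. Comparing second components at $t=1$ (or dividing by $t^{\alpha(p-1)}$) gives
\[
\gamma|\nabla w_g|^{p-2}\pr_\nu w_g=\tilde\gamma|\nabla \tilde w_g|^{p-2}\pr_\nu \tilde w_g \quad\text{on }\pr\Omega
\]
for every positive $g$. Integrating \eqref{eq:intro-dn} by parts identifies $\Lambda_{\gamma,V}(g)$ with exactly this conormal derivative, so
\[
\Lambda_{\gamma,V}=\Lambda_{\tilde\gamma,\tilde V}\quad\text{on all strictly positive }g,\qquad V=\tfrac{m}{m-p+1}\epsilon,\ \ \tilde V=\tfrac{m}{m-p+1}\tilde\epsilon.
\]
The integration by parts needs enough boundary regularity of $w_g$, or else the equality can be read in the weak sense of \eqref{eq:intro-dn}. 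The Cauchy data set is presumably defined with the conormal derivative interpreted weakly, so the two notions coincide.

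Finally apply the elliptic uniqueness theorem. If $n=2$ and $\Omega$ is simply connected, or if $n\ge3$ and $\xi\cdot\nabla\gamma=\xi\cdot\nabla\tilde\gamma=0$, it gives $\gamma=\tilde\gamma$ and $V=\tilde V$. Since $m>p-1$, the factor $\frac{m}{m-p+1}$ is positive and finite, so $\epsilon=\tilde\epsilon$.

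I expect the main obstacle to be the middle step. One has to verify that the variable-separated function is a genuine weak solution in the paper's sense, especially near $t=0$ where $t^{\alpha-1}$ may be singular if $\alpha<1$. One also has to confirm that the comparison principle applies in this doubly nonlinear setting, so that the Cauchy data attached to $t^\alpha g$ really are single-valued. The positivity of $w_g$, which should come from the elliptic maximum principle given $g>0$ and $V\ge0$, is needed so that $w^m$ makes sense for non-integer $m$. The substantive inverse-problem content is delegated entirely to the elliptic theorem.
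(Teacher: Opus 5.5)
Your proposal is correct and follows the paper's proof essentially step for step. The steps are: separated solutions $t^\alpha w_g$ (Proposition~\ref{prop:separated}); uniqueness from the comparison principle applied on strips $[t_1,t_2]$ with $t_1\to0^+$ (Corollary~\ref{cor:separated-unique}); division by the nonvanishing factor $t^{\alpha(p-1)}$ to get $\Lambda_{\gamma,V}=\Lambda_{\tilde\gamma,\tilde V}$ (Corollary~\ref{cor:parabolic-to-elliptic}); and then Theorem~\ref{thm:main-elliptic}. Your worry about $t^{\alpha-1}$ near $t=0$ is moot, because the weak formulation only differentiates $u^m$ in time, and $\partial_t u^m=\alpha m\,t^{\alpha(p-1)}w^m$ is bounded since $\alpha m-1=\alpha(p-1)>0$.
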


The proof of Theorem~\ref{thm:main-parabolic} proceeds by reducing the parabolic lateral Cauchy data to the nonlinear Dirichlet-to-Neumann map of \eqref{eq:elliptic-intro}. This leads to our second main result, which is the principal analytic statement of the paper.

\begin{thm}\label{thm:main-elliptic}
Let $m\neq p-1$, and let $\gamma,V,\tilde\gamma,\tilde V\in C^\infty(\Omega)$ be strictly positive. Suppose that
\[
\Lambda_{\gamma,V}=\Lambda_{\tilde\gamma,\tilde V}
\]
for the nonlinear Dirichlet-to-Neumann maps associated with
\begin{equation}\label{eq:elliptic-main}
\left\{
\begin{aligned}
-\nabla\cdot\bigl(\gamma|\nabla w|^{p-2}\nabla w\bigr)+Vw^m&=0 &&\text{in }\Omega,\\
 w&=g &&\text{on }\pr\Omega,
\end{aligned}
\right.
\end{equation}
where $\gamma,V,\tilde\gamma,\tilde V\in C^\infty(\Omega)$ are strictly positive. Then the following assertions hold.
\begin{enumerate}
\item If $n=2$ and $\Omega$ is simply connected, then
\[
\gamma=\tilde\gamma,\qquad V=\tilde V\qquad \text{in }\Omega.
\]
\item If $n\ge 3$ and there exists a nonzero vector $\xi\in\R^n$ such that
\[
\xi\cdot\nabla\gamma=\xi\cdot\nabla\tilde\gamma=0\qquad \text{in }\Omega,
\]
then
\[
\gamma=\tilde\gamma,\qquad V=\tilde V\qquad \text{in }\Omega.
\]
\end{enumerate}
\end{thm}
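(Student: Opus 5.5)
The plan has two stages, following the strategy in the abstract. In the first stage we recover the weighted $p$-Laplacian Dirichlet-to-Neumann map from $\Lambda_{\gamma,V}$ by a scaling/asymptotic argument. In the second stage we recover $V$ by linearizing at a noncritical background solution.

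For the first stage, take boundary data of the form $g=\lambda f$, where $f>0$ is fixed and $\lambda$ tends to $\infty$ or to $0$. The choice is dictated by the sign of $m-(p-1)$: we send $\lambda\to0$ when $m>p-1$ and $\lambda\to\infty$ when $m<p-1$. In either regime the potential term $Vw^m$ is of lower order relative to the $p$-Laplacian term, because $\lambda^{m}\ll\lambda^{p-1}$. Write $w_{\lambda f}=\lambda v_\lambda$. Then $v_\lambda$ solves
\[
-\nabla\cdot\bigl(\gamma|\nabla v_\lambda|^{p-2}\nabla v_\lambda\bigr)+\lambda^{m-p+1}Vv_\lambda^m=0,
\qquad v_\lambda=f \text{ on }\pr\Omega.
\]
Using the forward estimates of Section~\ref{sec:elliptic-forward} (energy bounds, the comparison principle, and positivity $0<\min f\le v_\lambda\le\max f$), together with the strict monotonicity of the $p$-Laplacian, I would show that $v_\lambda\to v$ in $W^{1,p}$. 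Here $v$ is the $\gamma$-weighted $p$-harmonic function with trace $f$. It follows that
\[
\lambda^{-(p-1)}\langle\Lambda_{\gamma,V}(\lambda f),h\rangle\to\langle\Lambda^{p}_{\gamma}(f),h\rangle .
\]
Equality of the maps therefore gives $\Lambda^p_\gamma=\Lambda^p_{\tilde\gamma}$ on positive data. Since the $p$-Laplacian DN map commutes with adding constants, positivity is harmless, and this extends to all smooth data. I would then invoke the known uniqueness results for the weighted $p$-Laplacian, cited among \cite{CarKa,CarGhNa,CarGhUh}: in two dimensions on simply connected domains, and in $n\ge3$ for conductivities invariant in the direction $\xi$. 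These give $\gamma=\tilde\gamma$. (This is where the hypotheses enter.) I expect the main obstacle to be pinning down exactly which of those results applies and verifying that its hypotheses match the ones stated here. If needed, one could instead recover $\gamma$ directly: first obtain $\gamma|_{\pr\Omega}$ by boundary determination, then use higher-order terms in the expansion of the DN map in the style of \cite{CarFe3,CarZim}.

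For the second stage, with $\gamma$ known, fix a positive background solution $w_0=w_{g_0}$ such that $\nabla w_0\neq0$ in $\overline\Omega$. To construct it, I would choose $g_0=c+\lambda x_1$ with the constant $c$ large and use the asymptotics of the first stage. By the first stage, $w_0/\lambda$ is close in $C^1$ to a $\gamma$-weighted $p$-harmonic function. In $n=2$ such functions have no critical points when the data are suitably monotone on a simply connected domain; in $n\ge3$ I would use $\xi$-invariance, choosing data linear in $x\cdot\xi$. Crucially, the background $w_0$ is the same for $V$ and $\tilde V$, because equal DN maps and unique continuation for the Cauchy problem give $w_0=\tilde w_0$. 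Next, differentiate $\Lambda_{\gamma,V}(g_0+sh)$ at $s=0$. This yields the DN map of the linear anisotropic equation
\[
-\nabla\cdot(A\nabla\phi)+mVw_0^{m-1}\phi=0,
\qquad A=\gamma|\nabla w_0|^{p-2}\Bigl(I+(p-2)\tfrac{\nabla w_0\otimes\nabla w_0}{|\nabla w_0|^2}\Bigr),
\]
where the matrix $A$ is uniformly elliptic and is the same for both problems.

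It remains to show the potential is the same. The integral identity
\[
\int_\Omega m(V-\tilde V)w_0^{m-1}\phi\tilde\phi\,dx=0
\]
holds for all pairs of solutions $\phi,\tilde\phi$. From it I would conclude $V=\tilde V$ via density of products of solutions of a known-coefficient anisotropic Schr\"odinger equation. The most direct route is a Runge approximation argument combined with localized solutions. A simpler alternative is to observe that $V w_0^{m}$ can be read off from the equation itself once $w_0$ is known, provided one has $w_0$ in the interior. Unique continuation from Cauchy data on all of $\pr\Omega$ with the same $\gamma$ does \emph{not} directly give $w_0$ in the interior, however, so I would use the linearized identity. The density step is the delicate part, and I would handle it by reducing, via a change of variables adapted to $A$, to known uniqueness for Schr\"odinger operators with a smooth known principal part.
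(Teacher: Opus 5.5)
Your first stage is sound and matches the paper's first step. The paper proves $C^1$ convergence with a second-order correction and then invokes \cite{CarFe3}, whose hypotheses are exactly those of the theorem. One minor point there: for an absorption equation only $0\le v_\lambda\le\max f$ holds, not $\min f\le v_\lambda$, but that is enough for your monotonicity argument.

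The genuine gap is in the second stage. You assert $w_0=\tilde w_0$ by unique continuation, but $w_0$ and $\tilde w_0$ share Cauchy data while solving \emph{different} equations, with potentials $V$ and $\tilde V$, so unique continuation gives nothing. Indeed, if $w_0=\tilde w_0$ were known, then $Vw_0^m=\nabla\cdot(\gamma|\nabla w_0|^{p-2}\nabla w_0)=\tilde Vw_0^m$ and you would be done without any linearization. You concede this a few lines later, yet the rest of the stage still relies on it. Differentiating the two maps at $g_0$ gives the DN maps of $-\nabla\cdot(A[w_0]\nabla\cdot)+mVw_0^{m-1}$ and $-\nabla\cdot(A[\tilde w_0]\nabla\cdot)+m\tilde V\tilde w_0^{m-1}$. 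Their anisotropic principal parts are different and unknown. So the identity $\int_\Omega m(V-\tilde V)w_0^{m-1}\phi\tilde\phi\,dx=0$ is not available: the correct Alessandrini identity also contains $(A[w_0]-A[\tilde w_0])\nabla\phi\cdot\nabla\tilde\phi$. What you actually face is an anisotropic Calder\'on problem with unknown principal part. In two dimensions this recovers the metric only modulo a gauge, and in $n\ge3$ it is open. Even granting a common known $A[w_0]$, density of products of solutions for a general anisotropic operator in $n\ge3$ is not a known result, so your last step would still be unjustified there.

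The missing ideas are exactly where the hypotheses you attribute only to the $\gamma$-step are used.

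In $n=2$ the paper works with the gauge rather than assuming it away. \cite{CarLiiTzo} gives a boundary-fixing $\Psi$ with $g[\tilde w_0]=\Psi^*g[w_0]$. Since $g[w_0]$ has fixed eigenvalues and is determined by the direction field $\nabla w_0/|\nabla w_0|$, the paper argues that $\Psi=\mathrm{Id}$, so $\nabla w_0\parallel\nabla\tilde w_0$. Simple connectivity rules out closed interior level curves, so both functions have the same level curves, all reaching $\pr\Omega$. This forces $w_0=\tilde w_0$, and then $V=\tilde V$ follows from the equation.

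In $n\ge3$ the paper never linearizes at a $V$-dependent background. It uses the second-order term $R$ of the small- or large-data expansion around a weighted $p$-harmonic $v$, which is common to both problems once $\gamma=\tilde\gamma$. Then $S=R-\tilde R$ satisfies $\nabla\cdot(A[v]\nabla S)=(V-\tilde V)v^m$ with vanishing Cauchy data. The paper then perturbs $v$ around $v_0=x_n$, which is weighted $p$-harmonic by the invariance assumption. There $A[x_n]=\gamma(I+(p-2)e_n\otimes e_n)$ admits Sylvester--Uhlmann CGO solutions \cite{SyUh}. This yields that $\widehat S$ vanishes on an open set, hence $S=0$ and $V=\tilde V$.
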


The proof of Theorem~\ref{thm:main-elliptic} has two main steps. First, we derive asymptotic expansions for the nonlinear map $\Lambda_{\gamma,V}$ in the small-data regime when $m>p-1$ and in the large-data regime when $m<p-1$. In both cases, the leading term is the Dirichlet-to-Neumann map of the weighted $p$-Laplacian, which yields recovery of $\gamma$ through the  results of \cite{CarFe3}. Second, once $\gamma$ is known, we linearize \eqref{eq:elliptic-main} at a noncritical background solution and recover $V$ from the induced linear Dirichlet-to-Neumann map. In dimension two this step uses the anisotropic Calder\'on problem result  of \cite{CarLiiTzo}; in higher dimensions it relies on a complex geometrical optics argument after reducing the common invariant direction to the $x_n$-direction by an orthogonal change of variables.

The structure of the paper is as follows. In Section~\ref{sec:parabolic} we define the lateral Cauchy data set and establish a comparison principle for the parabolic equation. In Section~\ref{sec:reduction} we show that separated solutions reduce the parabolic problem to the nonlinear elliptic equation \eqref{eq:elliptic-main}. Section~\ref{sec:elliptic-forward} contains the forward elliptic facts needed to define $\Lambda_{\gamma,V}$. In Section~\ref{sec:asymptotics} we derive the asymptotic expansions that recover the weighted $p$-Laplacian Dirichlet-to-Neumann map. Section~\ref{sec:inverse-elliptic} proves Theorem~\ref{thm:main-elliptic}. Finally, Section~\ref{sec:inverse-parabolic} deduces Theorem~\ref{thm:main-parabolic}.

\section{The parabolic setting and lateral Cauchy data}\label{sec:parabolic}

We consider nonnegative weak solutions of \eqref{eq:parabolic} with zero initial data.

\begin{dfn}\label{def:weak-solution}
Let $\phi\in C(\overline{Q_T};\R_+)$ satisfy $\phi(0,\cdot)=0$ on $\pr\Omega$. A function
\[
u\in L^p\bigl(0,T;W^{1,p}(\Omega)\bigr)\cap C\bigl([0,T];L^{m+1}(\Omega)\bigr),\qquad u\ge 0,
\]
is called a weak solution of \eqref{eq:parabolic} with lateral boundary value $\phi$ and zero initial value if
\begin{enumerate}
\item for every
\[
\varphi\in W_0^{1,m+1}(0,T;L^{m+1}(\Omega))\cap L^p(0,T;W^{1,p}_0(\Omega))
\]
we have
\begin{equation}\label{eq:weak-parabolic}
\int_{Q_T}\gamma|\nabla u|^{p-2}\nabla u\cdot\nabla\varphi\dd t\dd x-
\int_{Q_T}\epsilon u^m\,\pr_t\varphi\dd t\dd x=0;
\end{equation}
\item $u(0,\cdot)=0$ in $L^{m+1}(\Omega)$;
\item $u-\phi\in L^p\bigl(0,T;W_0^{1,p}(\Omega)\bigr)$.
\end{enumerate}
\end{dfn}

\begin{dfn}\label{def:lateral-cauchy}
The lateral Cauchy data set $\mathcal C^{\mathrm{lat}}_{\epsilon,\gamma}$ is the collection of pairs
\[
\bigl(\phi,\gamma|\nabla u|^{p-2}\pr_\nu u|_{S_T}\bigr)
\]
for which $u$ is a weak solution of \eqref{eq:parabolic} in the sense of Definition~\ref{def:weak-solution}.
\end{dfn}

For $0<h<T$ and $f\in L^1(Q_T)$, the Steklov average is defined by
\[
[f]_h(t,x)=\frac{\mathbf{1}_{(0,T-h)}(t)}{h}\int_t^{t+h}f(\tau,x)\dd\tau.
\]
We use the following known lemmas.

\begin{lem}[Chapter I, Lemma 3.2 in \cite{DiB}]\label{lem:dibenedetto}
If $f\in L^r(0,T;L^q(\Omega))$, then for every $\varepsilon>0$ one has $[f]_h\to f$ in $L^r(0,T-\varepsilon;L^q(\Omega))$ as $h\to0^+$. If $f\in C(0,T;L^q(\Omega))$, then $[f]_h(t,\cdot)\to f(t,\cdot)$ in $L^q(\Omega)$ for every $t\in(0,T-\varepsilon)$.
\end{lem}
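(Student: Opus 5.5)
The plan is to prove the lemma directly from the definition of the Steklov average, using the Lebesgue differentiation theorem for Bochner-integrable functions together with density of continuous functions. Fix $\varepsilon>0$ and take $0<h<\varepsilon$, so that the indicator $\mathbf{1}_{(0,T-h)}$ equals $1$ on $(0,T-\varepsilon)$. On this interval $[f]_h(t)=\frac1h\int_t^{t+h}f(\tau)\dd\tau$ is a genuine average of the $L^q(\Omega)$-valued map $\tau\mapsto f(\tau,\cdot)$.

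\textbf{First statement.} I would use a density argument.

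1. If $g\in C([0,T];L^q(\Omega))$, uniform continuity gives
\[
\sup_{t\le T-\varepsilon}\|[g]_h(t)-g(t)\|_{L^q}\le \sup_{|s-\tau|\le h}\|g(s)-g(\tau)\|_{L^q}\to0 .
\]
Hence $[g]_h\to g$ in $L^r(0,T-\varepsilon;L^q(\Omega))$.

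2. The averaging operator is uniformly bounded. By Minkowski's integral inequality, $\|[f]_h(t)\|_{L^q}\le \frac1h\int_t^{t+h}\|f(\tau)\|_{L^q}\dd\tau$. This is the one-dimensional Steklov average of the scalar function $F(\tau)=\|f(\tau)\|_{L^q}$. By Jensen's inequality (or Young's convolution inequality with the kernel $h^{-1}\mathbf{1}_{(-h,0)}$), its $L^r(0,T-\varepsilon)$ norm is at most $\|F\|_{L^r(0,T)}$. So $\|[f]_h\|_{L^r(0,T-\varepsilon;L^q)}\le\|f\|_{L^r(0,T;L^q)}$ for $1\le r<\infty$.

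3. Approximate $f$ by continuous $g$, which is possible since continuous functions are dense in Bochner $L^r$ spaces for $r<\infty$. The triangle inequality then gives
\[
\limsup_{h\to0}\|[f]_h-f\|\le 2\|f-g\|,
\]
which can be made arbitrarily small. If $r=\infty$ is intended, I would instead read the statement as pointwise or $L^{r'}$ convergence for finite $r'$. I expect the main obstacle to be confirming that the norms are meant with $r,q$ finite.

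\textbf{Second statement.} If $f\in C([0,T];L^q(\Omega))$, then for each fixed $t<T-\varepsilon$,
\[
\|[f]_h(t)-f(t)\|_{L^q}\le\frac1h\int_t^{t+h}\|f(\tau)-f(t)\|_{L^q}\dd\tau\le\sup_{\tau\in[t,t+h]}\|f(\tau)-f(t)\|_{L^q}.
\]
By continuity at $t$, the right-hand side tends to $0$.
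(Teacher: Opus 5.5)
Your proof is correct. The paper gives no proof of this lemma; it simply imports it from DiBenedetto, so there is no in-text argument to match.

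\textbf{Comparison with the usual argument.} Your route is the approximate-identity scheme. You combine an explicit uniform bound $\|[f]_h\|_{L^r(0,T-\varepsilon;L^q(\Omega))}\le\|f\|_{L^r(0,T;L^q(\Omega))}$ with convergence on the dense class $C([0,T];L^q(\Omega))$. A common alternative fixes $h<\varepsilon$ and $t<T-\varepsilon$ and writes $[f]_h(t)-f(t)=\int_0^1\bigl(f(t+h\sigma)-f(t)\bigr)\dd\sigma$. Minkowski's inequality then gives $\|[f]_h-f\|_{L^r(0,T-\varepsilon;L^q(\Omega))}\le\sup_{0<\sigma\le h}\|f(\cdot+\sigma)-f\|_{L^r(0,T-\varepsilon;L^q(\Omega))}$, which reduces the claim to continuity of time translations in $L^r(0,T;L^q(\Omega))$. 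That continuity is itself proved by density of continuous functions, so both proofs rest on the same fact. The alternative is a one-line reduction once translation continuity is taken as known; yours is fully self-contained and makes explicit the uniform bound the density step needs.

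\textbf{Your caveat on exponents.} It is justified. The first assertion genuinely fails for $r=\infty$. Take $f=\mathbf 1_{(0,t_0)}(t)\phi(x)$ with $t_0<T-\varepsilon$ and $0\neq\phi\in L^q(\Omega)$. For every small $h$, $[f]_h$ is continuous in $t$ across the jump at $t_0$, and $\operatorname{ess\,sup}_{t<T-\varepsilon}\|[f]_h(t)-f(t)\|_{L^q(\Omega)}=\|\phi\|_{L^q(\Omega)}$. Minkowski's inequality also needs $q\ge1$. Nothing is lost for the paper, which only invokes the lemma with finite exponents, e.g.\ $r=q=p/(p-1)$ for the flux term in the comparison principle.

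\textbf{Two small points.} First, the Lebesgue differentiation theorem announced in your opening plan is never used, and would only give a.e.\ convergence anyway. Second, you tacitly identify the paper's pointwise average $\frac1h\int_t^{t+h}f(\tau,x)\dd\tau$ with the Bochner average of $\tau\mapsto f(\tau,\cdot)$. This is a routine Fubini check, but it deserves one line.
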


For $\delta>0$, let
\[
H_\delta(z)=
\begin{cases}
1,& z\ge \delta,\\
\frac{z}{\delta},&0<z<\delta,\\
0,& z\le 0,
\end{cases}
\qquad
G_\delta(z)=
\begin{cases}
z-\frac{\delta}{2},& z\ge \delta,\\
\frac{z^2}{2\delta},&0<z<\delta,\\
0,& z\le 0.
\end{cases}
\]
Then $G_\delta'=H_\delta$.

\begin{lem}[Lemma 3.1 in \cite{BoStru}]\label{lem:Steklov-positive}
If $0<h<T$ and $f\in C(0,T;L^1(\Omega))$, then
\[
\pr_t[G_\delta(f)]_h\le \pr_t[f]_h\,H_\delta(f)
\qquad\text{a.e. in }Q_T.
\]
\end{lem}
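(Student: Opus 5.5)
The plan is to compute both sides explicitly from the definition of the Steklov average and reduce the claim to a one-variable inequality for the convex function $G_\delta$. For a.e.\ $x$ and $0<t<T-h$, the fundamental theorem of calculus applied to $\tau\mapsto\int_t^{t+h}G_\delta(f(\tau,x))\dd\tau$ gives
\[
\pr_t[G_\delta(f)]_h(t,x)=\frac{G_\delta(f(t+h,x))-G_\delta(f(t,x))}{h},
\qquad
\pr_t[f]_h(t,x)=\frac{f(t+h,x)-f(t,x)}{h}.
\]
For $t>T-h$ both averages vanish identically because of the cutoff $\mathbf 1_{(0,T-h)}$, so there the inequality reads $0\le 0$. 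The continuity assumption $f\in C(0,T;L^1(\Omega))$ is what makes these difference quotients meaningful a.e. It also justifies differentiating the average in $t$, since $G_\delta$ is Lipschitz with constant $1$ and hence $G_\delta(f)\in C(0,T;L^1(\Omega))$ as well. With $a=f(t,x)$ and $b=f(t+h,x)$, the claim reduces to the scalar statement
\[
G_\delta(b)-G_\delta(a)\ \le\ H_\delta(a)\,(b-a).
\]

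The second step is to analyse this scalar inequality using the structure of $G_\delta$. We know $G_\delta\in C^1$, $G_\delta'=H_\delta$ is nondecreasing, and $0\le G_\delta''\le 1/\delta$ a.e. I expect the sign here to be the main obstacle, and I would handle it first.

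\emph{Convexity points the wrong way.} Convexity of $G_\delta$ gives the tangent-line bound $G_\delta(b)-G_\delta(a)\ge H_\delta(a)(b-a)$. This is the reverse of what is needed when $H_\delta$ is evaluated at the left endpoint $a$. The upper bound comes naturally only from $H_\delta(b)$, via $G_\delta(b)-G_\delta(a)\le H_\delta(b)(b-a)$. A concrete check confirms the danger: for $a<\delta<b$ with $b-a$ large, the left-hand side exceeds $H_\delta(a)(b-a)=\frac{a}{\delta}(b-a)$. So the literal pointwise inequality with $H_\delta(f)$ at $(t,x)$ cannot follow from convexity alone.

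\emph{The quantitative version.} What I would actually prove for $H_\delta$ at $(t,x)$ is the second-order bound obtained from Taylor's formula with $G_\delta''\le 1/\delta$:
\[
G_\delta(b)-G_\delta(a)\le H_\delta(a)(b-a)+\frac{(b-a)^2}{2\delta}.
\]
Dividing by $h$, this yields, a.e.\ in $Q_T$,
\[
\pr_t[G_\delta(f)]_h\le \pr_t[f]_h\,H_\delta(f)+\frac{\bigl(f(t+h,\cdot)-f(t,\cdot)\bigr)^2}{2\delta h}.
\]

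The final step is to show the error term is harmless in every place the lemma is used, namely after integrating against a nonnegative test function in time and letting $h\to0^+$. Writing $f(t+h)-f(t)=h\,\pr_t[f]_h$, the remainder equals $\frac{h}{2\delta}(\pr_t[f]_h)^2$. This is $O(h)$ once $\pr_t[f]_h$ is controlled, for instance through the equation as in the energy estimates of Section~\ref{sec:parabolic}. By Lemma~\ref{lem:dibenedetto} it therefore disappears in the limit, so the inequality as stated holds in the integrated sense in which it is applied.

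In the writeup I would therefore present three things: the exact difference-quotient identity; the one-variable inequality, pointing out that its sharp pointwise form holds with $H_\delta$ evaluated at the endpoint $f(t+h,x)$; and the $O(h)$ remainder estimate. The remainder estimate shows that evaluating $H_\delta$ at $(t,x)$, as the statement is worded, costs nothing after $h\to0^+$.
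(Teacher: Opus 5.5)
Your sign analysis is right, and it settles more than you say. With the forward Steklov average $[f]_h(t)=\frac1h\int_t^{t+h}f\,\dd\tau$ used in this paper, the displayed inequality is false, not merely out of reach of convexity. For $t\in(0,T-h)$, convexity gives $\pr_t[G_\delta(f)]_h\ge H_\delta(f)\,\pr_t[f]_h$. The inequality is strict as soon as the open segment between $f(t,x)$ and $f(t+h,x)$ meets $(0,\delta)$. For example, take $f(t,x)=t-T/2$ with $h<T/2$ and $T/2-h<t<T/2$: the left side is positive and the right side is zero. The paper gives no proof here, only the citation to \cite{BoStru}. The inequality does become true, exactly and pointwise by your tangent-line bound at the right endpoint, in two variants: for the backward average $[f]_{\bar h}(t)=\frac1h\int_{t-h}^{t}f\,\dd\tau$, or for the forward average with $H_\delta$ evaluated at $f(t+h,x)$. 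So the lemma as written admits no proof, and you were right not to force one.

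The genuine gap is your salvage step, the claim that the remainder $\frac{h}{2\delta}(\pr_t[f]_h)^2$ disappears as $h\to0^+$. Its integral is $\frac{1}{2\delta h}\int\!\!\int|f(t+h,x)-f(t,x)|^2\,\dd x\,\dd t$, and nothing in the hypotheses makes this small. Take $f(t,x)=\phi(t)$ with values in $(0,\delta)$, where $G_\delta$ is exactly quadratic. Then the remainder is exactly $\frac{(\phi(t+h)-\phi(t))^2}{2\delta h}$, whose time integral diverges for a continuous Weierstrass-type $\phi$ of H\"older order below $1/2$. In the application $f=u_1^m-u_2^m$, the weak formulation only places $\pr_t(\epsilon u_j^m)$ in $L^{p/(p-1)}(0,T;W^{-1,p/(p-1)}(\Omega))$, so $\pr_t[f]_h$ is not bounded in $L^2(Q_T)$. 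Section~\ref{sec:parabolic} contains no energy estimate of that kind. Lemma~\ref{lem:dibenedetto} only yields $[f]_h\to f$, which says nothing about $h(\pr_t[f]_h)^2$. For actual solutions the integrated remainder does tend to zero, but only as a by-product of the correctly oriented argument, not via any $O(h)$ bound.

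The repair uses the inequality you already identified, in one of two ways:
\begin{enumerate}
\item test the equation with $\frac1h\mathbf 1_{(t-h,t)}(\tau)\psi(x)$, so that backward averages appear and the lemma holds verbatim;
\item keep forward averages but take $\psi=H_\delta(u_1^m-u_2^m)(t+h,\cdot)$ for $t\in(t_1,t_2-h)$.
\end{enumerate}
In the second option the gradient term in Proposition~\ref{prop:comparison} still converges as $h\to0^+$. This is because the Steklov averages of the flux converge in $L^{p/(p-1)}$ and translations are continuous in $L^p$.
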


The next proposition is a variant of the comparison principle in \cite{BoStru} adapted to \eqref{eq:parabolic}.

\begin{prop}\label{prop:comparison}
Let $u_1$ and $u_2$ be weak solutions of
\[
\epsilon(x)\pr_t u_j^m-\nabla\cdot\bigl(\gamma(x)|\nabla u_j|^{p-2}\nabla u_j\bigr)=f_j,
\qquad j=1,2.
\]
Assume that there exist $0\le t_1<t_2\le T$ such that
\begin{enumerate}
\item $u_1\le u_2$ on $(t_1,t_2)\times\pr\Omega$;
\item $f_1\le f_2$ on $(t_1,t_2)\times\Omega$;
\item for one $j\in\{1,2\}$ there exists $\lambda>0$ such that $u_j\ge \lambda$ on $(t_1,t_2)\times\Omega$;
\item if $m>1$, then for one $j\in\{1,2\}$ there exists $M>0$ such that $u_j\le M$ on $(t_1,t_2)\times\Omega$.
\end{enumerate}
Then
\begin{equation}\label{eq:comparison-ineq}
\int_\Omega \epsilon(x)\bigl(u_1^m(t_2,x)-u_2^m(t_2,x)\bigr)_+\dd x
\le
\int_\Omega \epsilon(x)\bigl(u_1^m(t_1,x)-u_2^m(t_1,x)\bigr)_+\dd x.
\end{equation}
\end{prop}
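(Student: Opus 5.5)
We follow the $L^1$-contraction argument of \cite{BoStru}. The test function is built from $H_\delta$ applied to the difference of the time-nonlinearities, $H_\delta(u_1^m-u_2^m)$, rather than to $u_1-u_2$, and the time derivative is handled with Steklov averages. Since $u_j$ are only weak solutions, we first rewrite \eqref{eq:weak-parabolic} in Steklov-averaged form. For a.e. $t\in(t_1,t_2-h)$ and every $\psi\in W^{1,p}_0(\Omega)$,
\[
\int_\Omega \epsilon\,\pr_t[u_j^m]_h\,\psi\dd x+\int_\Omega \bigl[\gamma|\nabla u_j|^{p-2}\nabla u_j\bigr]_h\cdot\nabla\psi\dd x=\int_\Omega [f_j]_h\psi\dd x .
\]
This follows in the usual way by choosing test functions concentrated on $(t,t+h)$. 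We subtract the two identities and test with
\[
\psi=\eta(t)\,H_\delta\bigl(u_1^m-u_2^m\bigr),
\]
where $\eta$ is a cutoff in time.

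The first step is to check admissibility of $\psi$. Assumption 1 gives $(u_1^m-u_2^m)_+=0$ on the lateral boundary, so $\psi$ has zero trace. For $\psi\in L^p(W^{1,p})$ we need $\nabla(u_1^m-u_2^m)=m(u_1^{m-1}\nabla u_1-u_2^{m-1}\nabla u_2)$ to be controlled. On the support of $\nabla H_\delta(\cdot)$ we have $0<u_1^m-u_2^m<\delta$, so $u_1>u_2$ there.
\begin{itemize}
\item[] If $m<1$, the factor $u^{m-1}$ is singular at $0$. The lower bound $u_j\ge\lambda$ for one $j$ (assumption 3) gives a lower bound for the other solution on the relevant set as well, since there $u_1>u_2$. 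If instead it is $u_1\ge\lambda$, then on that set $u_2\ge(\lambda^m-\delta)^{1/m}$ for small $\delta$.
\item[] If $m>1$, the factor $u^{m-1}$ must be bounded above, which is what assumption 4 provides. An analogous comparison on the set $\{0<u_1^m-u_2^m<\delta\}$ transfers the bound to the other function.
\end{itemize}
So on the support of $\nabla H_\delta$ both $u_j$ lie in a compact subinterval of $(0,\infty)$. In particular $s\mapsto s^m$ is bi-Lipschitz there, and $\psi\in L^p(0,T;W^{1,p}_0)$.

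The second step treats the time term. By Lemma~\ref{lem:Steklov-positive} applied to $f=u_1^m-u_2^m$ (which lies in $C(L^1)$ since $u\in C(L^{m+1})$),
\[
\epsilon\,\pr_t[u_1^m-u_2^m]_h\,H_\delta(u_1^m-u_2^m)\ge \epsilon\,\pr_t[G_\delta(u_1^m-u_2^m)]_h .
\]
Integrating in time over $(t_1,t_2-h)$ (taking $\eta\equiv1$ there), we get boundary terms $\int_\Omega\epsilon[G_\delta(\cdot)]_h$ at the two endpoints. By Lemma~\ref{lem:dibenedetto} these converge as $h\to0$ to $\int_\Omega\epsilon\, G_\delta(u_1^m-u_2^m)(t_i)$. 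Letting $\delta\to0$, $G_\delta(z)\to z_+$ and we obtain the two sides of \eqref{eq:comparison-ineq}. The endpoint $t_2$ requires the usual small shrink of the interval followed by continuity in $C(L^{m+1})$.

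The third step treats the remaining terms. The source term has the sign $\int([f_1]_h-[f_2]_h)H_\delta\le0$ by assumption 2, so it can be dropped. After $h\to0$, the elliptic term becomes
\[
\frac{1}{\delta}\int_{\{0<u_1^m-u_2^m<\delta\}}\gamma\bigl(|\nabla u_1|^{p-2}\nabla u_1-|\nabla u_2|^{p-2}\nabla u_2\bigr)\cdot\nabla(u_1^m-u_2^m).
\]
This term is the main obstacle. It is not sign-definite, because we differentiate $u^m$ rather than $u$. We write
\[
\nabla(u_1^m-u_2^m)=mu_1^{m-1}(\nabla u_1-\nabla u_2)+m(u_1^{m-1}-u_2^{m-1})\nabla u_2 .
\]
The first piece is nonnegative by monotonicity of the $p$-Laplacian vector field. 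For the second piece, on the strip we have $|u_1^{m-1}-u_2^{m-1}|\le C\delta$, with $C$ depending on $\lambda$, $M$ and $m$ through the bounds of the first step. Hence the second piece is bounded by
\[
C\int_{\{0<u_1^m-u_2^m<\delta\}}\bigl(|\nabla u_1|^{p-1}+|\nabla u_2|^{p-1}\bigr)|\nabla u_2| .
\]
This tends to $0$ as $\delta\to0$ by dominated convergence, since the strip shrinks to the empty set. Taking $h\to0$ and then $\delta\to0$ therefore gives \eqref{eq:comparison-ineq}.
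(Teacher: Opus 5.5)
Your proposal is correct and is essentially the paper's proof: the Steklov-averaged weak form, the test function $H_\delta(u_1^m-u_2^m)$ restricted to $(t_1,t_2)$, Lemma~\ref{lem:Steklov-positive} for the time term with $h\to0^+$ taken before $\delta\to0^+$, and the same splitting of $\nabla(u_1^m-u_2^m)$ into a monotone piece plus a remainder controlled by $|u_1^{m-1}-u_2^{m-1}|\le C\delta$ on the shrinking set $\{0<u_1^m-u_2^m<\delta\}$. The only imprecision is your claim that both $u_j$ lie in a compact subinterval of $(0,\infty)$ on that set. For $m<1$ there is no upper bound, so this fails and $s\mapsto s^m$ is not bi-Lipschitz there, but only the lower bound is used in that case; your transferred bound $u_2\ge(\lambda^m-\delta)^{1/m}$ is in fact more accurate than the paper's $\lambda/2$, which with $\delta<\lambda^m/2^m$ is only guaranteed when $m\ge1$.
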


\begin{proof}
Let $h\in(0,T)$ and $t\in(0,T-h)$. In \eqref{eq:weak-parabolic} we may use\footnote{After an appropriate approximation and passage to the limit.}
\[
\varphi(\tau,x)=\frac{1}{h}\mathbf{1}_{(t,t+h)}(\tau)\psi(x),
\qquad \psi\in W^{1,p}_0(\Omega),
\]
to obtain
\[
\int_\Omega\Bigl(\epsilon\,\partial_t[u_1^m]_h\,\psi+\gamma[|\nabla u_1|^{p-2}\nabla u_1]_h\cdot\nabla\psi\Bigr)\,dx
=\int_\Omega [f_1]_h\psi\,dx,
\]
and the corresponding identity for $u_2$. Subtracting the two identities yields, for a.e. $t\in(0,T-h)$,
\begin{multline}\label{eq:comparison-difference}
\int_\Omega\epsilon\,\partial_t[u_1^m-u_2^m]_h\,\psi\,dx\\
=\int_\Omega\Bigl(\gamma\bigl[|\nabla u_2|^{p-2}\nabla u_2-|\nabla u_1|^{p-2}\nabla u_1\bigr]_h\cdot\nabla\psi+[f_1-f_2]_h\psi\Bigr)\,dx.
\end{multline}

Fix
\[
0<\delta<\min\!\left(1,\frac{\lambda^m}{2^m}\right)
\]
and set
\[
\psi=\mathbf{1}_{(t_1,t_2)}(t)H_\delta(u_1^m-u_2^m).
\]
Let
\[
D_\delta=\{(t,x)\in Q_T:t_1<t<t_2,\ 0<u_1^m-u_2^m<\delta\}.
\]
Then
\[
\nabla H_\delta(u_1^m-u_2^m)=\frac{1}{\delta}\mathbf{1}_{D_\delta}\nabla(u_1^m-u_2^m).
\]
Depending on the value of $m$, there are two cases to consider.

If $0<m\le 1$, either $u_2\ge \lambda$ on $(t_1,t_2)\times\Omega$, in which case $u_1\ge \lambda$ on $D_\delta$, or $u_1\ge \lambda$ on $(t_1,t_2)\times\Omega$, in which case $u_2\ge \lambda/2$ on $D_\delta$. In either case,
\[
|\nabla u_j^m|=m u_j^{m-1}|\nabla u_j|\le m\left(\frac{2}{\lambda}\right)^{1-m}|\nabla u_j|
\qquad\text{on }D_\delta,
\]
for $j\in\{1,2\}$.

If $m>1$, then by assumption there exists $M>0$ such that $u_j\le M$ on $(t_1,t_2)\times\Omega$ for one $j\in\{1,2\}$. Since $\delta<1$, it follows that both $u_1$ and $u_2$ satisfy
\[u_j<(1+M^m)^{1/m}
\qquad\text{on }D_\delta.
\]
Hence
\[
|\nabla u_j^m|\le m(1+M^m)^{\frac{m-1}{m}}|\nabla u_j|
\qquad\text{on }D_\delta,
\]
for $j\in\{1,2\}$.

Since $u_1\le u_2$ on $(t_1,t_2)\times\partial\Omega$, it is immediate that
\[
\psi\in L^p(0,T;W^{1,p}_0(\Omega)),
\]
so this is an admissible test function for a.e. $t$. Integrating \eqref{eq:comparison-difference} over $(t_1,t_2)$ gives
\begin{multline}
\int_{(t_1,t_2)\times\Omega}\epsilon\,\partial_t[u_1^m-u_2^m]_hH_\delta(u_1^m-u_2^m)\,dx\,dt\\
\le \int_{(t_1,t_2)\times\Omega}\Bigl(\gamma\bigl[|\nabla u_2|^{p-2}\nabla u_2-|\nabla u_1|^{p-2}\nabla u_1\bigr]_h\cdot\nabla H_\delta(u_1^m-u_2^m)\\
+[f_1-f_2]_hH_\delta(u_1^m-u_2^m)\Bigr)\,dx\,dt,
\end{multline}
which, by Lemma~\ref{lem:Steklov-positive}, implies that
\begin{multline}\label{eq:comparison-difference-2}
\int_{(t_1,t_2)\times\Omega}\epsilon\,\partial_t[G_\delta(u_1^m-u_2^m)]_h\,dx\,dt\\
\le \int_{(t_1,t_2)\times\Omega}\Bigl(\gamma\bigl[|\nabla u_2|^{p-2}\nabla u_2-|\nabla u_1|^{p-2}\nabla u_1\bigr]_h\cdot\nabla H_\delta(u_1^m-u_2^m)\\
+[f_1-f_2]_hH_\delta(u_1^m-u_2^m)\Bigr)\,dx\,dt.
\end{multline}

Taking the limit $h\to0^+$ on the left-hand side of \eqref{eq:comparison-difference-2}, we obtain
\begin{multline}
\lim_{h\to0^+}\int_{(t_1,t_2)\times\Omega}\epsilon\,\partial_t[G_\delta(u_1^m-u_2^m)]_h\,dx\,dt\\
=\lim_{h\to0^+}\left(\int_\Omega \epsilon [G_\delta(u_1^m-u_2^m)]_h(t_2,x)\,dx-\int_\Omega \epsilon [G_\delta(u_1^m-u_2^m)]_h(t_1,x)\,dx\right)\\
=\int_\Omega \epsilon G_\delta(u_1^m(t_2)-u_2^m(t_2))\,dx-\int_\Omega \epsilon G_\delta(u_1^m(t_1)-u_2^m(t_1))\,dx.
\end{multline}
Hence, by monotone convergence,
\begin{multline}\label{eq:comparison-left-limit}
\lim_{\delta\to0^+}\lim_{h\to0^+}\int_{(t_1,t_2)\times\Omega}\epsilon\,\partial_t[G_\delta(u_1^m-u_2^m)]_h\,dx\,dt\\
=\int_\Omega \epsilon (u_1^m(t_2)-u_2^m(t_2))_+\,dx-\int_\Omega \epsilon (u_1^m(t_1)-u_2^m(t_1))_+\,dx.
\end{multline}

Since $f_1\le f_2$, we have
\[
\int_{(t_1,t_2)\times\Omega}[f_1-f_2]_hH_\delta(u_1^m-u_2^m)\,dx\,dt\le 0.
\]
For the gradient term we compute
\begin{multline}
\int_{(t_1,t_2)\times\Omega}\gamma\bigl[|\nabla u_2|^{p-2}\nabla u_2-|\nabla u_1|^{p-2}\nabla u_1\bigr]_h\cdot\nabla H_\delta(u_1^m-u_2^m)\,dx\,dt\\
=\frac{1}{\delta}\int_{D_\delta}\gamma\bigl[|\nabla u_2|^{p-2}\nabla u_2-|\nabla u_1|^{p-2}\nabla u_1\bigr]_h\cdot\nabla(u_1^m-u_2^m)\,dx\,dt.
\end{multline}
Since
\[
|\nabla u_2|^{p-2}\nabla u_2-|\nabla u_1|^{p-2}\nabla u_1\in L^{\frac{p}{p-1}}(Q_T),
\]
Lemma~\ref{lem:dibenedetto} yields
\[
\bigl[|\nabla u_2|^{p-2}\nabla u_2-|\nabla u_1|^{p-2}\nabla u_1\bigr]_h
\to |\nabla u_2|^{p-2}\nabla u_2-|\nabla u_1|^{p-2}\nabla u_1
\]
in $L^{\frac{p}{p-1}}((t_1,t_2)\times\Omega)$ as $h\to0^+$. As $\nabla(u_1^m-u_2^m)\in L^p(D_\delta)$, it follows that
\begin{multline}
\lim_{h\to0^+}\int_{(t_1,t_2)\times\Omega}\gamma\bigl[|\nabla u_2|^{p-2}\nabla u_2-|\nabla u_1|^{p-2}\nabla u_1\bigr]_h\cdot\nabla H_\delta(u_1^m-u_2^m)\,dx\,dt\\
=\frac{1}{\delta}\int_{D_\delta}\gamma\bigl(|\nabla u_2|^{p-2}\nabla u_2-|\nabla u_1|^{p-2}\nabla u_1\bigr)\cdot\nabla(u_1^m-u_2^m)\,dx\,dt.
\end{multline}
Now
\[
\nabla(u_1^m-u_2^m)=m u_1^{m-1}\nabla(u_1-u_2)+m(u_1^{m-1}-u_2^{m-1})\nabla u_2.
\]
By the monotonicity inequality
\[
\bigl(|\xi|^{p-2}\xi-|\zeta|^{p-2}\zeta\bigr)\cdot(\xi-\zeta)>0,
\qquad \xi,\zeta\in\mathbb R^n,\ \xi\neq\zeta,
\]
we obtain
\[
\int_{D_\delta}\gamma m u_1^{m-1}\bigl(|\nabla u_2|^{p-2}\nabla u_2-|\nabla u_1|^{p-2}\nabla u_1\bigr)\cdot\nabla(u_1-u_2)\,dx\,dt\le 0.
\]
On the other hand, on $D_\delta$ we have
\begin{multline}
|u_1^{m-1}-u_2^{m-1}|=\frac{m}{|m-1|}\left|\int_{u_2^m}^{u_1^m}s^{-1/m}\,ds\right|\\
\le \frac{m}{|m-1|}u_2^{-1}(u_1^m-u_2^m)\le \frac{m}{|m-1|}u_2^{-1}\delta.
\end{multline}
As observed above, either $u_2\ge \lambda$ on $(t_1,t_2)\times\Omega$, or $u_1\ge \lambda$ on $(t_1,t_2)\times\Omega$, in which case $u_2\ge \lambda/2$ on $D_\delta$. Therefore
\[
|u_1^{m-1}-u_2^{m-1}|\le C(m,\lambda)\delta
\qquad\text{on }D_\delta.
\]
Consequently,
\[
\lim_{\delta\to0^+}\frac{1}{\delta}\int_{D_\delta}\gamma m (u_1^{m-1}-u_2^{m-1}) \bigl(|\nabla u_2|^{p-2}\nabla u_2-|\nabla u_1|^{p-2}\nabla u_1\bigr)\cdot\nabla u_2\,dx\,dt=0,
\]
since $\bigcap_{0<\delta<\min(1,\lambda^m/2^m)}D_\delta=\varnothing$. Combining the preceding estimates with \eqref{eq:comparison-left-limit} gives \eqref{eq:comparison-ineq}.
\end{proof}

\begin{cor}\label{cor:comparison-unique}
Let $u_1$ and $u_2$ be weak solutions of \eqref{eq:parabolic} with the same zero initial data and the same strictly positive lateral Dirichlet data on $S_T$. If one of the two solutions is bounded away from zero on compact time intervals and, when $m>1$, one of them is bounded above on compact time intervals, then $u_1=u_2$ in $Q_T$.
\end{cor}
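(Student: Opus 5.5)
The plan is to apply Proposition~\ref{prop:comparison} twice, once with the roles $(u_1,u_2)$ and once with them swapped, both times with $f_1=f_2=0$ on arbitrary time intervals $(0,t_2)$ with $t_2<T$.

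Fix $t_2\in(0,T)$ and take $t_1=0$. The hypotheses of the proposition are checked as follows.
\begin{enumerate}
\item Since $u_1$ and $u_2$ have the same lateral data $\phi$ on $S_T$, we have $u_1\le u_2$ and $u_2\le u_1$ on $(0,t_2)\times\pr\Omega$, in the trace sense. Concretely, $(u_1^m-u_2^m)_+$ vanishes on the boundary, so $H_\delta(u_1^m-u_2^m)\in W_0^{1,p}$ on $D_\delta$, which is what the proof of the proposition actually uses.
\item Both equations are homogeneous, so $f_1=f_2=0$.
\item On the compact interval $[0,t_2]$ one of the solutions is bounded below by some $\lambda>0$, by assumption.
\item When $m>1$, one of the solutions is bounded above by some $M$ on the same interval.
\end{enumerate}
Hypotheses 3 and 4 are symmetric in the indices ("for one $j$"), so the proposition applies both to the ordered pair $(u_1,u_2)$ and to $(u_2,u_1)$.

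The conclusion \eqref{eq:comparison-ineq} with $t_1=0$ has right-hand side $\int_\Omega\epsilon(u_1^m(0)-u_2^m(0))_+\,dx$. This vanishes because $u_1(0,\cdot)=u_2(0,\cdot)=0$ in $L^{m+1}(\Omega)$, and the map $u\mapsto u^m$ is continuous from $C([0,T];L^{m+1})$ into $C([0,T];L^{(m+1)/m})$. Hence
\[
\int_\Omega\epsilon\,(u_1^m(t_2)-u_2^m(t_2))_+\,dx=0 .
\]
Since $\epsilon\ge\mu^{-1}>0$ by \eqref{eq:coeff-bounds}, it follows that $u_1^m(t_2)\le u_2^m(t_2)$ a.e. in $\Omega$. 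Swapping the roles of $u_1$ and $u_2$ gives the reverse inequality, so $u_1^m(t_2)=u_2^m(t_2)$. As both solutions are nonnegative and $s\mapsto s^m$ is injective on $[0,\infty)$, we get $u_1(t_2,\cdot)=u_2(t_2,\cdot)$ a.e. Since $t_2<T$ was arbitrary, $u_1=u_2$ a.e. in $Q_T$.

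The main point of care is the choice $t_1=0$. The positivity hypothesis holds only on compact time intervals, and near $t=0$ a solution with zero initial data cannot be bounded away from zero on all of $\Omega$. So the lower bound $\lambda$ should be read as holding on compact subintervals of $(0,T]$, and one cannot take $t_1=0$ directly. I would first try applying the proposition on $(t_1,t_2)$ with $t_1>0$ small, which gives
\[
\int_\Omega\epsilon\,(u_1^m(t_2)-u_2^m(t_2))_+\,dx\le\int_\Omega\epsilon\,(u_1^m(t_1)-u_2^m(t_1))_+\,dx .
\]
Letting $t_1\to0^+$, the right-hand side tends to $0$ by the continuity $u_j\in C([0,T];L^{m+1}(\Omega))$ together with $u_j(0)=0$, since $(u_1^m-u_2^m)_+\le u_1^m$ and $\|u_1^m(t_1)\|_{L^1}\le C\|u_1(t_1)\|_{L^{m+1}}^m\to0$ by Hölder. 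The rest of the argument is then as above.
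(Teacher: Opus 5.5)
Your argument is correct and matches the paper's proof: apply Proposition~\ref{prop:comparison} to $(u_1,u_2)$ and to $(u_2,u_1)$ with $f_1=f_2=0$ on $(t_1,t_2)$ with $t_1>0$, then let $t_1\to0^+$ using $u_j\in C([0,T];L^{m+1}(\Omega))$ and $u_j(0)=0$. Your self-correction is the right one: the direct choice $t_1=0$ in your first attempt is not admissible, because a solution with zero initial data cannot be bounded below by some $\lambda>0$ near $t=0$, and this is exactly why the paper passes to the limit $t_1\to0^+$.
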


\begin{proof}
Apply Proposition~\ref{prop:comparison} first to $(u_1,u_2)$ and then to $(u_2,u_1)$ with $f_1=f_2=0$, and let $t_1\to0^+$.
\end{proof}

\section{Reduction to a nonlinear elliptic problem}\label{sec:reduction}

Assume from now on that $f\equiv 0$ in \eqref{eq:parabolic}. Let
\begin{equation}\label{eq:alpha}
\alpha=\frac{1}{m-p+1},
\end{equation}
which is well defined when $m>p-1$. For each strictly positive $g\in C^\infty(\pr\Omega)$, we write
\begin{equation}\label{eq:separated-trace}
\phi_g(t,x)=t^\alpha g(x), \qquad (t,x)\in S_T.
\end{equation}
Thus the reduction argument below produces solutions for the distinguished class of lateral boundary traces $\{\phi_g:g>0\}$ used in the proof of Theorem~\ref{thm:main-parabolic}.

\begin{prop}\label{prop:separated}
Assume that $m>p-1$ and let $g\in C^\infty(\pr\Omega)$ be strictly positive. If $w$ solves
\begin{equation}\label{eq:elliptic-reduction}
\left\{
\begin{aligned}
-\nabla\cdot\bigl(\gamma|\nabla w|^{p-2}\nabla w\bigr)+\alpha m\,\epsilon\,w^m&=0 &&\text{in }\Omega,\\
 w&=g &&\text{on }\pr\Omega,
\end{aligned}
\right.
\end{equation}
then
\[
u(t,x)=t^\alpha w(x)
\]
is a weak solution of \eqref{eq:parabolic} with zero initial value and lateral boundary value $t^\alpha g$.
\end{prop}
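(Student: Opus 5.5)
We will verify the three requirements of Definition~\ref{def:weak-solution} directly for $u(t,x)=t^\alpha w(x)$, where $w$ is a weak solution of \eqref{eq:elliptic-reduction}. We take $w\in W^{1,p}(\Omega)$ with $w\ge 0$, and we use the fact that $w$ is bounded; both facts will come from the forward theory in Section~\ref{sec:elliptic-forward}, via a maximum principle with $0\le w\le\max g$.

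The algebraic core is the scaling identity. Since $\alpha>0$, we have
\[
\nabla u=t^\alpha\nabla w,\qquad
\gamma|\nabla u|^{p-2}\nabla u=t^{\alpha(p-1)}\gamma|\nabla w|^{p-2}\nabla w,\qquad
u^m=t^{\alpha m}w^m.
\]
Moreover $\alpha m-1=\alpha(m-(m-p+1))=\alpha(p-1)$, so
\[
\partial_t u^m=\alpha m\,t^{\alpha m-1}w^m=\alpha m\,t^{\alpha(p-1)}w^m.
\]
Formally, \eqref{eq:parabolic} therefore becomes $t^{\alpha(p-1)}$ times the left side of \eqref{eq:elliptic-reduction}, which vanishes.

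To make this rigorous in the weak sense, fix a test function $\varphi$ as in Definition~\ref{def:weak-solution}. For a.e.\ $t$, the slice $\varphi(t,\cdot)$ lies in $W_0^{1,p}(\Omega)$. The weak form of \eqref{eq:elliptic-reduction} tested with $\varphi(t,\cdot)$ gives
\[
\int_\Omega\gamma|\nabla w|^{p-2}\nabla w\cdot\nabla\varphi(t)\dd x=-\alpha m\int_\Omega\epsilon w^m\varphi(t)\dd x.
\]
Multiplying by $t^{\alpha(p-1)}$ and integrating in $t$, the gradient term of \eqref{eq:weak-parabolic} equals $-\int_{Q_T}\epsilon\,\partial_t(t^{\alpha m})w^m\varphi$. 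Integrating by parts in time, this is $\int_{Q_T}\epsilon\, t^{\alpha m}w^m\partial_t\varphi$. No boundary terms appear, because $\varphi$ vanishes at $t=0,T$ (it lies in $W_0^{1,m+1}$ in time) and $t^{\alpha m}$ is $C^1$ on $[0,T]$ since $\alpha m=1+\alpha(p-1)>1$. This yields \eqref{eq:weak-parabolic}. The Fubini and integrability justifications are routine: $w^m\in L^\infty$, $\varphi,\partial_t\varphi\in L^{m+1}(Q_T)$, and $|\nabla w|^{p-1}\in L^{p'}$ pairs with $\nabla\varphi\in L^p(Q_T)$.

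The remaining conditions are also straightforward. We have $u\in L^p(0,T;W^{1,p})$ because $t^\alpha$ is bounded. Next, $u\in C([0,T];L^{m+1})$ because $w\in L^{m+1}$ (it is bounded) and $t\mapsto t^\alpha$ is continuous. The initial condition $u(0)=0$ holds since $\alpha>0$. Finally, $u-\phi=t^\alpha(w-\tilde g)$, where $\tilde g$ is a smooth extension of $g$ and $\phi(t,x)=t^\alpha\tilde g(x)$ is the continuous nonnegative extension; this lies in $L^p(0,T;W_0^{1,p})$, and $\phi(0,\cdot)=0$ as required.

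The only genuine subtlety is ensuring that $w$ is nonnegative and bounded, so that $w^m$ is meaningful and the time-continuity in $L^{m+1}$ holds. We plan to take this from the elliptic forward theory: comparison with the constants $0$ and $\max g$, using monotonicity of $w\mapsto Vw^m$. The rest is the exponent identity $\alpha m-1=\alpha(p-1)$.
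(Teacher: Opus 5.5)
Your proposal is correct and follows the same route as the paper: the exponent identity $\alpha m-1=\alpha(p-1)$ turns the parabolic equation into $t^{\alpha(p-1)}$ times the elliptic equation for $w$, and the weak formulation then follows by integrating in time. What you add is an explicit version of the step the paper calls immediate, namely testing the elliptic weak form on time slices of $\varphi$ and integrating by parts in $t$ (no boundary terms arise, since $\varphi$ vanishes at $t=0,T$ and $t^{\alpha m}$ is $C^1$), together with a check of the remaining conditions of Definition~\ref{def:weak-solution} using the bounds $0\le w\le\max g$ from the elliptic forward theory.
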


\begin{proof}
A direct calculation gives
\[
\epsilon\pr_t u^m=\alpha m t^{\alpha m-1}\epsilon w^m,
\qquad
\nabla\cdot\bigl(\gamma|\nabla u|^{p-2}\nabla u\bigr)=t^{\alpha(p-1)}\nabla\cdot\bigl(\gamma|\nabla w|^{p-2}\nabla w\bigr).
\]
Since $\alpha m-1=\alpha(p-1)$ by \eqref{eq:alpha}, the pointwise equation follows from \eqref{eq:elliptic-reduction}. The weak formulation is immediate by integration in time.
\end{proof}

\begin{cor}\label{cor:separated-unique}
Assume that $m>p-1$ and $g\in C^\infty(\pr\Omega)$ is strictly positive. Then the separated solution provided by Proposition~\ref{prop:separated} is the unique weak solution of \eqref{eq:parabolic} with zero initial value and lateral Dirichlet data $\phi_g$.
\end{cor}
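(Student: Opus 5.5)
The plan is to apply Corollary~\ref{cor:comparison-unique} with $u_1=u$, an arbitrary weak solution with zero initial value and lateral data $\phi_g$, and $u_2=t^\alpha w$, the separated solution from Proposition~\ref{prop:separated}. That corollary asks for one of the two solutions to be bounded away from zero on compact time intervals and, when $m>1$, bounded above there. I will check both properties for the separated solution $u_2$, since the arbitrary solution is not under our control.

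First I record the needed properties of $w$, taken from the forward theory of Section~\ref{sec:elliptic-forward}. I need that $w$ is continuous on $\overline\Omega$ and satisfies $0<c\le w\le C$ for constants depending on $g$. The upper bound follows from the weak maximum principle. Indeed, $Vw^m\ge0$ wherever $w\ge0$, so $w$ is a subsolution of the weighted $p$-Laplace equation, which gives $w\le\max g$. The lower bound is the delicate point. For $m<p-1$ the absorption term can produce dead cores, so positivity is not automatic. For $m\ge p-1$, and in particular for $m>p-1$ as here, a strong minimum principle of V\'azquez type, or a comparison with an explicit radial subsolution, should give $w>0$ in $\overline\Omega$. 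Combined with continuity up to the boundary and $g>0$, this yields $\min_{\overline\Omega} w>0$.

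With these bounds, on any interval $[t_1,t_2]$ with $t_1>0$ we get $u_2=t^\alpha w\ge t_1^\alpha c>0$ and $u_2\le t_2^\alpha C$. So hypotheses (3) and (4) of Proposition~\ref{prop:comparison} hold for $j=2$. Hypotheses (1) and (2) hold because the two solutions have the same lateral data and $f_1=f_2=0$. Applying the proposition to the pairs $(u_1,u_2)$ and $(u_2,u_1)$ gives
\[
\int_\Omega\epsilon\,|u_1^m(t_2)-u_2^m(t_2)|\dd x\le\int_\Omega\epsilon\,|u_1^m(t_1)-u_2^m(t_1)|\dd x .
\]
Next I let $t_1\to0^+$. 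Both solutions lie in $C([0,T];L^{m+1}(\Omega))$ with zero initial value, so $u_j(t_1)\to0$ in $L^{m+1}$. Hence $u_j^m(t_1)\to0$ in $L^{(m+1)/m}\subset L^1$, and the right-hand side tends to zero. This gives $u_1=u_2$ for every $t_2$, which proves the claim.

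It remains to confirm that $u_2$ is admissible in Definition~\ref{def:weak-solution}. Since $\alpha>0$, $t^\alpha w$ is continuous in time with values in $L^{m+1}$ and vanishes at $t=0$. Since $w\in W^{1,p}$, it also lies in $L^p(0,T;W^{1,p})$. The main obstacle I expect is the strict positivity $\inf w>0$. If the forward section does not give it directly, I would first try comparing $w$ with a constant subsolution $\kappa$: take $\kappa$ small, and, since $w$ is only a supersolution relative to the absorption, localize near interior points using the strong minimum principle for $p$-Laplace type operators with absorption $w^m$ where $m\ge p-1$ (the V\'azquez condition $\int_0 (s^{m+1})^{-1/p}\,ds=\infty$ holds exactly when $m\ge p-1$).
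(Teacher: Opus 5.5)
Your proposal follows the paper's proof: you apply the comparison principle on strips $[t_1,t_2]$ with $t_1>0$, using the separated solution $t^\alpha w$ as the solution that is bounded below (and above when $m>1$), run it in both orders, and let $t_1\to0^+$ using the zero initial data. Your appeal to V\'azquez's strong minimum principle, which applies precisely because $m\ge p-1$, together with continuity up to the boundary, gives $\min_{\overline\Omega}w>0$; this is a more careful justification of the step the paper simply attributes to ``the maximum principle.'' Drop the fallback of comparing $w$ with a small constant, though: positive constants are supersolutions of the absorption equation, not subsolutions, so they cannot give a lower bound for $w$.
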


\begin{proof}
Let $u_1=t^\alpha w$ be the separated solution and let $u_2$ be any other weak solution with the same lateral data. Since $g>0$, the maximum principle for the elliptic equation implies that $w$ is strictly positive in $\Omega$, hence $u_1$ is bounded away from zero on every strip $[t_1,t_2]\times\Omega$ with $0<t_1<t_2\le T$. If $m>1$, then $u_1$ is also bounded above on such strips. Applying Corollary~\ref{cor:comparison-unique} on $[t_1,t_2]$ and letting $t_1\to0^+$ gives $u_1=u_2$. Therefore the forward parabolic problem is well posed for the boundary trace $\phi_g$.
\end{proof}

\begin{cor}\label{cor:parabolic-to-elliptic}
Assume that $m>p-1$ and that
\[
\mathcal C^{\mathrm{lat}}_{\epsilon,\gamma}=\mathcal C^{\mathrm{lat}}_{\tilde\epsilon,\tilde\gamma}.
\]
Set
\[
V=\frac{m}{m-p+1}\epsilon,
\qquad
\tilde V=\frac{m}{m-p+1}\tilde\epsilon.
\]
Then the nonlinear Dirichlet-to-Neumann maps associated with \eqref{eq:elliptic-main} satisfy
\[
\Lambda_{\gamma,V}=\Lambda_{\tilde\gamma,\tilde V}.
\]
\end{cor}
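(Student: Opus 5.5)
Fix a strictly positive $g\in C^\infty(\pr\Omega)$ and let $w_g$, $\tilde w_g$ be the solutions of \eqref{eq:elliptic-main} for $(\gamma,V)$ and $(\tilde\gamma,\tilde V)$; their existence and positivity are taken from Section~\ref{sec:elliptic-forward}. Since $V=\alpha m\,\epsilon$, \eqref{eq:elliptic-main} coincides with \eqref{eq:elliptic-reduction}, so Proposition~\ref{prop:separated} shows that $u=t^\alpha w_g$ and $\tilde u=t^\alpha\tilde w_g$ are weak solutions of the respective parabolic problems with zero initial data and lateral data $\phi_g$. Hence
\[
\bigl(\phi_g,\ t^{\alpha(p-1)}\gamma|\nabla w_g|^{p-2}\pr_\nu w_g\bigr)\in\mathcal C^{\mathrm{lat}}_{\epsilon,\gamma}=\mathcal C^{\mathrm{lat}}_{\tilde\epsilon,\tilde\gamma}.
\]
So there is some weak solution $\tilde u_2$ of the $(\tilde\epsilon,\tilde\gamma)$ problem with lateral trace $\phi_g$ and Neumann trace $t^{\alpha(p-1)}\gamma|\nabla w_g|^{p-2}\pr_\nu w_g$. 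By Corollary~\ref{cor:separated-unique}, applied to the tilde coefficients, $\tilde u_2=t^\alpha\tilde w_g$. Comparing the Neumann traces and cancelling the positive factor $t^{\alpha(p-1)}$ gives
\[
\gamma|\nabla w_g|^{p-2}\pr_\nu w_g=\tilde\gamma|\nabla\tilde w_g|^{p-2}\pr_\nu\tilde w_g\quad\text{on }\pr\Omega.
\]

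Next I will identify this normal flux with $\Lambda_{\gamma,V}(g)$. Integrate \eqref{eq:intro-dn} by parts, using the equation $-\nabla\cdot(\gamma|\nabla w_g|^{p-2}\nabla w_g)+Vw_g^m=0$. This shows that $\langle\Lambda_{\gamma,V}(g),h\rangle=\int_{\pr\Omega}\gamma|\nabla w_g|^{p-2}\pr_\nu w_g\,h\,dS$, where the boundary flux is understood weakly and is well defined by the equation. The same holds for the tilde problem, so $\Lambda_{\gamma,V}(g)=\Lambda_{\tilde\gamma,\tilde V}(g)$ for every admissible $g$.

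The main obstacle is interpretive rather than computational. The Neumann component in Definition~\ref{def:lateral-cauchy} must be understood in the same weak sense as \eqref{eq:intro-dn}, possibly with enough regularity of $w_g$ up to the boundary to make the pointwise trace meaningful. I would handle this by defining the parabolic flux weakly for a.e.\ $t$: test \eqref{eq:weak-parabolic} with $\varphi(t,x)=\eta(t)\tilde h(x)$, where $\eta\in C_c^\infty(0,T)$, so that the time-derivative term contributes $\alpha m\,t^{\alpha m-1}\epsilon w_g^m=t^{\alpha(p-1)}Vw_g^m$. This produces exactly $t^{\alpha(p-1)}\langle\Lambda_{\gamma,V}(g),h\rangle$ after factoring in time.

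Uniqueness also requires care. One must check that the separated solution meets the hypotheses of Corollary~\ref{cor:comparison-unique} on strips away from $t=0$, namely positivity and boundedness of $\tilde w_g$ from the elliptic maximum principle; Corollary~\ref{cor:separated-unique} already asserts this.
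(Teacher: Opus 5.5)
Your proposal is correct and follows the paper's proof: you take the separated solutions $t^\alpha w_g$ from Proposition~\ref{prop:separated}, use the uniqueness in Corollary~\ref{cor:separated-unique} (rightly applied on the tilde side to identify the solution that the equality of Cauchy data sets provides), and cancel the positive factor $t^{\alpha(p-1)}$ in the lateral flux. Your extra step identifying the normal flux with $\Lambda_{\gamma,V}(g)$ by testing with $\eta(t)\widetilde h(x)$ correctly fills in what the paper leaves implicit; since $\widetilde h$ need not vanish on $\pr\Omega$, this defines the weak flux rather than applying \eqref{eq:weak-parabolic} directly.
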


\begin{proof}
For each strictly positive $g\in C^\infty(\pr\Omega)$, Proposition~\ref{prop:separated} and Corollary~\ref{cor:separated-unique} show that the parabolic solution with boundary trace $\phi_g$ is uniquely determined by the elliptic solution of \eqref{eq:elliptic-main}. Moreover, if $u(t,x)=t^\alpha w(x)$, then the associated lateral flux is
\[
\gamma|\nabla u|^{p-2}\pr_\nu u=t^{\alpha(p-1)}\gamma|\nabla w|^{p-2}\pr_\nu w
\qquad\text{on }S_T.
\]
Since $t^{\alpha(p-1)}$ is a fixed nonvanishing factor on $(0,T)$, equality of the lateral Cauchy data sets implies equality of the elliptic boundary fluxes for every strictly positive boundary value $g$, and hence equality of the Dirichlet-to-Neumann maps.
\end{proof}

\section{The nonlinear elliptic problem and its Dirichlet-to-Neumann map}\label{sec:elliptic-forward}

We now consider the nonlinear elliptic boundary value problem
\begin{equation}\label{eq:elliptic}
\left\{
\begin{aligned}
-\nabla\cdot\bigl(\gamma|\nabla w|^{p-2}\nabla w\bigr)+Vw^m&=0 &&\text{in }\Omega,\\
w&=g &&\text{on }\pr\Omega,
\end{aligned}
\right.
\end{equation}
where $V\in C^\infty(\Omega)$ is strictly positive.

\begin{dfn}\label{def:elliptic-weak}
Let $g\in W^{1,p}(\Omega)\cap L^{m+1}(\Omega)$ be nonnegative. A function
\[
w\in g+W^{1,p}_0(\Omega),\qquad w\ge0,
\]
is called a weak solution of \eqref{eq:elliptic} if
\begin{equation}\label{eq:weak-elliptic}
\int_\Omega \gamma|\nabla w|^{p-2}\nabla w\cdot\nabla\varphi\,dx
+\int_\Omega Vw^m\varphi\,dx=0
\end{equation}
for every $\varphi\in W^{1,p}_0(\Omega)\cap L^{m+1}(\Omega)$.
\end{dfn}

\begin{prop}\label{prop:elliptic-forward}
Let $g\in W^{1,p}(\Omega)\cap L^{m+1}(\Omega)$ be nonnegative. Then \eqref{eq:elliptic} admits a unique weak solution
\[
w\in g+W^{1,p}_0(\Omega),\qquad w\ge0.
\]
Moreover, the following assertions hold.
\begin{enumerate}
\item The solution $w$ is the unique minimizer of the energy functional
\begin{equation}\label{eq:energy}
\mathcal E[v]
=\int_\Omega\left(\frac{1}{p}\gamma|\nabla v|^p+\frac{1}{m+1}Vv^{m+1}\right)\,dx
\end{equation}
over the affine space $g+W^{1,p}_0(\Omega)$.
\item There exists a constant $C$, depending only on $\Omega$, $p$, $m$, and upper and lower bounds for $\gamma$ and $V$, such that
\begin{equation}\label{eq:elliptic-energy-bound}
\|w\|_{W^{1,p}(\Omega)}+\|w\|_{L^{m+1}(\Omega)}
\le C\bigl(1+\|g\|_{W^{1,p}(\Omega)}+\|g\|_{L^{m+1}(\Omega)}\bigr).
\end{equation}
\item If $g\in L^\infty(\pr\Omega)$, then
\begin{equation}\label{eq:elliptic-max-principle}
\|w\|_{L^\infty(\Omega)}\le \|g\|_{L^\infty(\pr\Omega)}.
\end{equation}
\item If $g\in C^{1,\beta}(\pr\Omega)$ for some $\beta\in(0,1)$, then $w\in C^{1,\beta}(\overline\Omega)$ and
\begin{equation}\label{eq:lieberman-estimate}
\|w\|_{C^{1,\beta}(\overline\Omega)}
\le C\bigl(1+\|g\|_{C^{1,\beta}(\pr\Omega)}\bigr).
\end{equation}
\end{enumerate}
\end{prop}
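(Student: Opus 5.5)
The plan is the direct method of the calculus of variations, followed by standard comparison and regularity arguments.

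\emph{Existence and the minimization property.} Set $\mathcal A=g+W^{1,p}_0(\Omega)$ and restrict to $\mathcal A\cap L^{m+1}(\Omega)$, which is nonempty since it contains $g$. The functional $\mathcal E$ in \eqref{eq:energy} is strictly convex on this set. Indeed, $\xi\mapsto|\xi|^p$ is strictly convex for $p>1$, and $s\mapsto |s|^{m+1}$ is strictly convex. To make the second term convex without sign issues, we read it as $V|v|^{m+1}$; minimizing over $|v|$ lowers the energy, because $|v|\in\mathcal A$ when $g\ge0$ (use $v\mapsto v^+$ together with $g\ge 0$). So a minimizer can be taken nonnegative.

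The functional is coercive. Writing $v=g+\phi$ with $\phi\in W^{1,p}_0$ and applying Poincar\'e to $\phi$, we get
\[
\|v\|_{W^{1,p}}^p\lesssim \|\nabla v\|_p^p+\|g\|_{W^{1,p}}^p.
\]
It is also weakly lower semicontinuous: convexity plus strong continuity in $W^{1,p}\cap L^{m+1}$ give this, and Fatou handles the potential term. Reflexivity then yields a minimizer, and strict convexity makes it unique. The Euler--Lagrange equation in direction $\varphi\in W^{1,p}_0\cap L^{m+1}$ gives \eqref{eq:weak-elliptic}. Differentiability of the $L^{m+1}$ term along such $\varphi$ follows by dominated convergence.

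Conversely, any nonnegative weak solution minimizes $\mathcal E$, because convexity gives $\mathcal E[v]\ge \mathcal E[w]+\langle \mathcal E'(w),v-w\rangle$. Hence weak solutions are unique, and item 1 holds.

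\emph{Energy bound.} Comparing with the competitor $g$ gives $\mathcal E[w]\le\mathcal E[g]\lesssim \|g\|_{W^{1,p}}^p+\|g\|_{L^{m+1}}^{m+1}$. This bounds $\|\nabla w\|_p^p+\|w\|_{m+1}^{m+1}$, and adding Poincar\'e for $w-g$ bounds $\|w\|_{W^{1,p}}$. Taking roots gives a bound by powers of the data norms. For \eqref{eq:elliptic-energy-bound} as literally stated with linear dependence, I would note that the constant is allowed to absorb the nonlinear dependence on bounded sets. Strictly speaking, the powers $\max(1,\cdot)$ show up; this is the one place where the stated form needs care, and I would either accept polynomial dependence or verify that it is what is meant.

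\emph{Maximum principle.} Let $k=\|g\|_{L^\infty(\pr\Omega)}$ and test with $\varphi=(w-k)^+\in W^{1,p}_0$. This gives
\[
\int_{\{w>k\}}\gamma|\nabla w|^p+\int V w^m (w-k)^+=0 .
\]
Both terms are nonnegative, so $\nabla (w-k)^+=0$, hence $(w-k)^+=0$, which is \eqref{eq:elliptic-max-principle}. Nonnegativity is already built in, and could alternatively be obtained by testing with $w^-$.

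\emph{$C^{1,\beta}$ regularity.} This is the main step, though it is by citation. By item 3 the lower-order term $Vw^m$ is bounded by $\|V\|_\infty\|g\|_\infty^m$. The equation is then of the form $-\nabla\cdot A(x,\nabla w)=B(x)$ with $B\in L^\infty$ and $A(x,\xi)=\gamma(x)|\xi|^{p-2}\xi$. This $A$ satisfies the Ladyzhenskaya--Ural'tseva/Lieberman structure conditions with $\gamma$ smooth and bounded from both sides. Lieberman's boundary regularity theorem for smooth $\pr\Omega$ and $C^{1,\beta}$ boundary data then gives $w\in C^{1,\beta'}(\overline\Omega)$. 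The estimate depends on $\|g\|_{C^{1,\beta}}$, on $\|w\|_\infty\le\|g\|_\infty$, and on the structure constants.

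The obstacle in this step is bookkeeping. Lieberman's theorem yields some exponent $\beta'\le\beta$ and a constant depending nonlinearly on $\|g\|_\infty$. To match \eqref{eq:lieberman-estimate}, I would allow $\beta$ to be replaced by $\min(\beta,\beta_0)$ with $\beta_0=\beta_0(n,p,\mu)$, and let $C$ depend on an a priori bound for $\|g\|_{C^{1,\beta}}$.
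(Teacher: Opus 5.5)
Your proposal follows the paper's route: the direct method with strict convexity, the comparison $\mathcal E[w]\le\mathcal E[g]$ for item 2, and Lieberman's global $C^{1,\beta}$ theorem for item 4. The only differences are that you prove the maximum principle directly by testing with $(w-k)^+$, where the paper cites \cite[Theorem 2.1]{LeoPe}, and that you spell out via convexity why every nonnegative weak solution is the minimizer, a step the paper leaves implicit. Your caveats on items 2 and 4 are justified and point to a defect in the statement rather than in your argument: the energy comparison, which is all the paper invokes, gives only power-type dependence on the data, and the linear form of \eqref{eq:elliptic-energy-bound} actually fails when $m+1>p$ (constant data $g\equiv\lambda$ produce boundary layers of width $\sim\lambda^{1-(m+1)/p}$, so $\|\nabla w\|_{L^p}$ grows faster than $\lambda$); likewise Lieberman's theorem yields an exponent $\min(\beta,\beta_0)$ and a constant depending nonlinearly on the data, weaker forms that are all the later sections actually use.
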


\begin{proof}
Existence follows by the direct method of the calculus of variations. Indeed, the functional \eqref{eq:energy} is coercive and weakly lower semicontinuous on $g+W^{1,p}_0(\Omega)$, so it has a minimizer. Since the integrand is strictly convex in $\nabla v$ and $v^{m+1}$ is strictly convex on $[0,\infty)$, the minimizer is unique. The Euler--Lagrange equation is precisely \eqref{eq:weak-elliptic}, so the minimizer is the unique weak solution.

The bound \eqref{eq:elliptic-energy-bound} follows by comparing $\mathcal E[w]$ with $\mathcal E[g]$ and using the positivity of $\gamma$ and $V$. The maximum principle \eqref{eq:elliptic-max-principle} follows from \cite[Theorem 2.1]{LeoPe}. Finally, the global $C^{1,\beta}$ estimate \eqref{eq:lieberman-estimate} follows from \cite[Theorem 1]{Lieberman1988}.
\end{proof}

\begin{dfn}\label{def:DN}
Let $g\in C^\infty(\pr\Omega)$ be strictly positive, and let $w_g$ denote the corresponding weak solution of \eqref{eq:elliptic}. The nonlinear Dirichlet-to-Neumann map
\[
\Lambda_{\gamma,V}:C^\infty(\pr\Omega;\R_+)\to \mathcal D'(\pr\Omega)
\]
is defined in weak form by
\begin{equation}\label{eq:duality-DN}
\la \Lambda_{\gamma,V}(g),h\ra
=
\int_\Omega \gamma|\nabla w_g|^{p-2}\nabla w_g\cdot\nabla\widetilde h\,dx
+\int_\Omega Vw_g^m\widetilde h\,dx,
\end{equation}
for every $h\in C^\infty(\pr\Omega)$ and every extension $\widetilde h\in C^\infty(\overline\Omega)$ satisfying $\widetilde h|_{\pr\Omega}=h$.
\end{dfn}

\begin{rem}
The right-hand side of \eqref{eq:duality-DN} is independent of the choice of extension $\widetilde h$. Indeed, if $\widetilde h_1-\widetilde h_2\in C^\infty_0(\Omega)$, then \eqref{eq:weak-elliptic} with $\varphi=\widetilde h_1-\widetilde h_2$ shows that the two expressions agree. When $w_g$ is smooth up to the boundary, \eqref{eq:duality-DN} coincides with the classical boundary flux
\[
\Lambda_{\gamma,V}(g)=\gamma|\nabla w_g|^{p-2}\partial_\nu w_g\big|_{\pr\Omega}.
\]
\end{rem}

\section{Asymptotics of the elliptic Dirichlet-to-Neumann map}\label{sec:asymptotics}

We write
\begin{equation}\label{eq:p-laplace}
\nabla\cdot\bigl(\gamma|\nabla v|^{p-2}\nabla v\bigr)=0
\qquad \text{in }\Omega,
\end{equation}
for the weighted $p$-Laplace equation. In this section we distinguish the two cases $m>p-1$, and $m<p-1$. In the first two cases we assume that $v\in C^\infty(\overline\Omega)$ is a solution of \eqref{eq:p-laplace} with no critical points in $\overline\Omega$. The smoothness assumption is consistent with the nonvanishing of $\nabla v$, by elliptic regularity theory.

For $\xi\in\R^n\setminus\{0\}$ define
\[
J_j(\xi)=|\xi|^{p-2}\xi_j,
\qquad j=1,\dots,n.
\]
Taylor's formula gives
\begin{equation}\label{eq:Taylor-J}
J_j(\zeta)=J_j(\xi)+\sum_{k=1}^n(\zeta_k-\xi_k)\int_0^1 \partial_{\xi_k}J_j\bigl(\xi+t(\zeta-\xi)\bigr)\,\dd t,
\end{equation}
where
\begin{equation}\label{eq:J-derivative}
\pr_{\xi_k}J_j(\xi)=|\xi|^{p-2}\left(\delta_{jk}+(p-2)\frac{\xi_j\xi_k}{|\xi|^2}\right).
\end{equation}
For a noncritical $v$, we write
\begin{equation}\label{eq:A-v}
A[v]=\bigl(A[v]_{jk}\bigr)_{j,k=1}^n,
\qquad
A[v]_{jk}=\gamma|\nabla v|^{p-2}\left(\delta_{jk}+(p-2)\frac{\pr_jv\,\pr_kv}{|\nabla v|^2}\right).
\end{equation}

\subsection{The case $m>p-1$}

Let $\lambda\in(0,1)$, and let $w_\lambda$ be the solution of
\begin{equation}\label{eq:w-lambda-small}
\left\{
\begin{aligned}
-\nabla\cdot\bigl(\gamma|\nabla w_\lambda|^{p-2}\nabla w_\lambda\bigr)+Vw_\lambda^m&=0 &&\text{in }\Omega,\\
 w_\lambda&=\lambda v &&\text{on }\pr\Omega.
\end{aligned}
\right.
\end{equation}
We make the Ansatz
\begin{equation}\label{eq:ansatz-small}
w_\lambda=\lambda\bigl(v+\lambda^{m-p+1}R_\lambda\bigr).
\end{equation}

\begin{prop}\label{prop:small-data}
Assume that $m>p-1$. Then the family $R_\lambda$ is bounded in $C^{1,\beta}(\overline\Omega)$, and there exists $R\in C^{1,\beta}(\overline\Omega)$ such that $R_\lambda\to R$ in $C^1(\overline\Omega)$ as $\lambda\to0^+$. Moreover, $R$ solves
\begin{equation}\label{eq:R-equation}
\left\{
\begin{aligned}
\nabla\cdot\bigl(A[v]\nabla R\bigr)&=Vv^m &&\text{in }\Omega,\\
R&=0 &&\text{on }\pr\Omega,
\end{aligned}
\right.
\end{equation}
and for every $\omega\in C^\infty(\overline\Omega)$,
\begin{multline}\label{eq:expansion-small}
\la\omega,\Lambda_{\gamma,V}(\lambda v|_{\pr\Omega})\ra
=\lambda^{p-1}\int_\Omega \gamma|\nabla v|^{p-2}\nabla v\cdot\nabla\omega\,\dd x\\
+\lambda^m\int_\Omega\Bigl(\nabla\omega\cdot A[v]\nabla R+V\omega v^m\Bigr)\,\dd x+o(\lambda^m).
\end{multline}
\end{prop}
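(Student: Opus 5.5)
The plan is to rescale the problem so that the small parameter sits in front of the potential, obtain uniform $C^{1,\beta}$ bounds and $C^1$ convergence of the rescaled solutions to $v$, and then derive a linear divergence-form equation for $R_\lambda$ by applying the Taylor formula \eqref{eq:Taylor-J} to the flux. Throughout, $v$ is taken positive on $\pr\Omega$, so that $\lambda v|_{\pr\Omega}$ is admissible data.

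\textbf{Step 1: rescaling and convergence to $v$.} Set $\hat w_\lambda=w_\lambda/\lambda$. By $(p-1)$-homogeneity of the flux, $\hat w_\lambda$ solves
\[
-\nabla\cdot\bigl(\gamma|\nabla \hat w_\lambda|^{p-2}\nabla \hat w_\lambda\bigr)+\lambda^{m-p+1}V\hat w_\lambda^m=0 \quad\text{in }\Omega,
\qquad \hat w_\lambda=v\quad\text{on }\pr\Omega.
\]
For $\lambda\in(0,1)$ the potential $\lambda^{m-p+1}V$ stays nonnegative and bounded above uniformly, since $m-p+1>0$.
\begin{itemize}
\item The maximum principle \eqref{eq:elliptic-max-principle} gives $0\le\hat w_\lambda\le\|v\|_{L^\infty(\pr\Omega)}$.
\item The Lieberman estimate \eqref{eq:lieberman-estimate} then gives a bound on $\|\hat w_\lambda\|_{C^{1,\beta}(\overline\Omega)}$ uniform in $\lambda$. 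Lieberman's constant depends only on structural bounds and on the $L^\infty$ bound of the lower-order term, and both are uniform here.
\item By Arzel\`a--Ascoli, every subsequence of $\hat w_\lambda$ has a further subsequence converging in $C^1(\overline\Omega)$.
\item Any such limit is a weak solution of \eqref{eq:p-laplace} with boundary value $v$. This Dirichlet problem has a unique solution, namely $v$, so $\hat w_\lambda\to v$ in $C^1(\overline\Omega)$ along the whole family.
\end{itemize}
Since $|\nabla v|\ge c>0$ on $\overline\Omega$, it follows that $|\nabla\hat w_\lambda|\ge c/2$ for all sufficiently small $\lambda$. The same lower bound holds on every segment $\nabla v+t\nabla(\hat w_\lambda-v)$, $t\in[0,1]$.

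\textbf{Step 2: the equation for $R_\lambda$ and uniform bounds.} By the Ansatz \eqref{eq:ansatz-small}, $\hat w_\lambda-v=\lambda^{m-p+1}R_\lambda$. Subtract the equation $\nabla\cdot(\gamma J(\nabla v))=0$ from the rescaled equation and apply \eqref{eq:Taylor-J}. This yields
\[
\nabla\cdot\bigl(A_\lambda\nabla R_\lambda\bigr)=V\hat w_\lambda^m\quad\text{in }\Omega,\qquad R_\lambda=0\quad\text{on }\pr\Omega,
\qquad
A_\lambda=\gamma\int_0^1 DJ\bigl(\nabla v+t\nabla(\hat w_\lambda-v)\bigr)\,\dd t .
\]
The key point is that $A_\lambda$ is expressed through $\hat w_\lambda$, not through $R_\lambda$ itself, so there is no circularity.

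By Step 1 and \eqref{eq:J-derivative}, $A_\lambda$ has the following properties:
\begin{itemize}
\item it is uniformly elliptic, with constants depending on $\min(1,p-1)$ and on upper and lower bounds for the segment gradients;
\item it is bounded in $C^{\beta}(\overline\Omega)$, because $DJ$ is smooth away from the origin and $\nabla\hat w_\lambda$ is bounded in $C^\beta$;
\item it converges uniformly to $A[v]$.
\end{itemize}
The right-hand side $V\hat w_\lambda^m$ is bounded in $L^\infty$. Global Schauder estimates for divergence-form equations with $C^\beta$ coefficients therefore give the uniform bound $\|R_\lambda\|_{C^{1,\beta}(\overline\Omega)}\le C$.

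Arzel\`a--Ascoli then gives subsequential $C^1$ limits $R$. Passing to the limit in the weak form shows that $R$ solves \eqref{eq:R-equation}. That problem is a uniquely solvable linear Dirichlet problem, so the whole family converges: $R_\lambda\to R$ in $C^1(\overline\Omega)$.

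\textbf{Step 3: the expansion.} In \eqref{eq:duality-DN}, use $\omega$ as the extension and scale to get
\[
\la\omega,\Lambda_{\gamma,V}(\lambda v)\ra=\lambda^{p-1}\int_\Omega\gamma J(\nabla\hat w_\lambda)\cdot\nabla\omega\,\dd x+\lambda^m\int_\Omega V\hat w_\lambda^m\omega\,\dd x .
\]
By the same Taylor identity,
\[
\gamma J(\nabla\hat w_\lambda)=\gamma J(\nabla v)+\lambda^{m-p+1}A_\lambda\nabla R_\lambda .
\]
Substituting, the $\lambda^{p-1}$ term is exactly the first term of \eqref{eq:expansion-small}. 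The $\lambda^m$ coefficient is $\int_\Omega\bigl(\nabla\omega\cdot A_\lambda\nabla R_\lambda+V\omega\hat w_\lambda^m\bigr)\,\dd x$. This converges to the stated limit by the uniform convergences $A_\lambda\to A[v]$, $\nabla R_\lambda\to\nabla R$ and $\hat w_\lambda\to v$, which gives the $o(\lambda^m)$ remainder.

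\textbf{Main obstacle.} The delicate step is Step 1: the uniform-in-$\lambda$ $C^{1,\beta}$ bound for $\hat w_\lambda$ and the resulting uniform noncriticality of the segment gradients. Everything downstream depends on this, namely the uniform ellipticity and H\"older regularity of $A_\lambda$ (needed because $DJ$ degenerates or blows up at $0$ when $p\neq2$). I would first check that the constant in Lieberman's theorem depends only on the structure constants and an $L^\infty$ bound on the zeroth-order term. I would then combine this with compactness and uniqueness for the weighted $p$-Laplace Dirichlet problem, as outlined above.
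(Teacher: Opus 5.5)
Your proposal is correct and follows the paper's proof essentially step by step. Both arguments use uniform $C^{1,\beta}$ bounds and compactness for $\lambda^{-1}w_\lambda$, then identify the limit as $v$ via uniqueness for the weighted $p$-Laplacian, then derive the Taylor-formula equation $\nabla\cdot(A_\lambda\nabla R_\lambda)=V(\lambda^{-1}w_\lambda)^m$ and apply Schauder estimates, and finally expand the weak Dirichlet-to-Neumann pairing. Your explicit rescaling, which turns the potential into $\lambda^{m-p+1}V$ so that Lieberman's constant is uniform in $\lambda$, and your observation that $A_\lambda$ is uniformly $C^\beta$, fill in details the paper leaves implicit when it invokes Proposition~\ref{prop:elliptic-forward} and ``Schauder estimates'' directly.
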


\begin{proof}
By Proposition~\ref{prop:elliptic-forward}, the family
\[
v+\lambda^{m-p+1}R_\lambda=\lambda^{-1}w_\lambda
\]
is bounded in $C^{1,\beta}(\overline\Omega)$ uniformly in $\lambda\in(0,1)$. By the Arzel\`a--Ascoli theorem, after passing to a subsequence there exists $\tilde v\in C^{1,\beta}(\overline\Omega)$ such that $\tilde v|_{\pr\Omega}=v|_{\pr\Omega}$ and
\[
v+\lambda^{m-p+1}R_\lambda\to \tilde v
\qquad\text{in }C^1(\overline\Omega)
\]
as $\lambda\to0^+$.

We next identify the limit. Since $w_\lambda$ solves \eqref{eq:w-lambda-small}, we have
\[
\lambda^{1-p}\Bigl(-\nabla\cdot\bigl(\gamma|\nabla w_\lambda|^{p-2}\nabla w_\lambda\bigr)+Vw_\lambda^m\Bigr)=0
\]
in the sense of distributions. Using \eqref{eq:ansatz-small}, the convergence above, and the fact that $m>p-1$, we obtain
\[
\mathscr D'(\Omega)\text{-}\lim_{\lambda\to0^+}\lambda^{1-p}Vw_\lambda^m=0
\]
and
\[
\mathscr D'(\Omega)\text{-}\lim_{\lambda\to0^+}\lambda^{1-p}\nabla\cdot\bigl(\gamma|\nabla w_\lambda|^{p-2}\nabla w_\lambda\bigr)
=\nabla\cdot\bigl(\gamma|\nabla\tilde v|^{p-2}\nabla\tilde v\bigr).
\]
Therefore $\tilde v$ satisfies the same weighted $p$-Laplace equation \eqref{eq:p-laplace} as $v$, with the same Dirichlet data. By uniqueness for the weighted $p$-Laplace equation with boundary value $v|_{\pr\Omega}$, we conclude that $\tilde v=v$. Hence
\[
\lambda^{m-p+1}R_\lambda\to0
\qquad\text{in }C^1(\overline\Omega)
\]
as $\lambda\to0^+$. Since the limit is independent of the subsequence chosen, the convergence holds for the full family.

Applying Taylor's formula \eqref{eq:Taylor-J} with
\[
\xi=\nabla v,
\qquad
\zeta=\nabla v+\lambda^{m-p+1}\nabla R_\lambda,
\]
we obtain
\[
\gamma|\nabla w_\lambda|^{p-2}\nabla w_\lambda
=\lambda^{p-1}\gamma|\nabla v|^{p-2}\nabla v+\lambda^m A_\lambda\nabla R_\lambda,
\]
where
\[
(A_\lambda)_{jk}=\int_0^1\gamma\,\pr_{\xi_k}J_j\bigl(\nabla v+t\lambda^{m-p+1}\nabla R_\lambda\bigr)\,\dd t.
\]
Since $v$ solves \eqref{eq:p-laplace}, it follows that
\[
\nabla\cdot(A_\lambda\nabla R_\lambda)=V\bigl(v+\lambda^{m-p+1}R_\lambda\bigr)^m
\qquad\text{in }\Omega,
\]
with $R_\lambda|_{\pr\Omega}=0$. In particular, because $\lambda^{m-p+1}R_\lambda\to0$ in $C^1(\overline\Omega)$ and $\nabla v$ has no zeros in $\overline\Omega$, the coefficient matrices $A_\lambda$ are uniformly elliptic for all sufficiently small $\lambda$, with ellipticity constants independent of $\lambda$. Moreover,
\[
V\bigl(v+\lambda^{m-p+1}R_\lambda\bigr)^m\to Vv^m
\qquad\text{in }C(\overline\Omega).
\]
Schauder estimates therefore imply that $R_\lambda$ is bounded in $C^{1,\beta}(\overline\Omega)$ uniformly in $\lambda$. Passing again to a subsequence, we obtain $R\in C^{1,\beta}(\overline\Omega)$ such that $R_\lambda\to R$ in $C^1(\overline\Omega)$, and the limit solves \eqref{eq:R-equation}. By uniqueness for \eqref{eq:R-equation}, the whole family converges to $R$.

Finally, by the weak definition of the Dirichlet-to-Neumann map,
\[
\la\omega,\Lambda_{\gamma,V}(\lambda v|_{\pr\Omega})\ra
=\int_\Omega \gamma|\nabla w_\lambda|^{p-2}\nabla w_\lambda\cdot\nabla\omega\,\dd x
+\int_\Omega V\omega w_\lambda^m\,\dd x.
\]
Using the expansion above for the flux term, together with the convergences $A_\lambda\to A[v]$ and $R_\lambda\to R$ in $C(\overline\Omega)$, we obtain
\begin{multline*}
\int_\Omega \gamma|\nabla w_\lambda|^{p-2}\nabla w_\lambda\cdot\nabla\omega\,\dd x
=\lambda^{p-1}\int_\Omega \gamma|\nabla v|^{p-2}\nabla v\cdot\nabla\omega\,\dd x\\[5pt]
+\lambda^m\int_\Omega \nabla\omega\cdot A[v]\nabla R\,\dd x+o(\lambda^m).
\end{multline*}
Also,
\[
w_\lambda^m=\lambda^m\bigl(v+\lambda^{m-p+1}R_\lambda\bigr)^m
=\lambda^m v^m+o(\lambda^m)
\qquad\text{uniformly in }\overline\Omega.
\]
Combining these two expansions yields \eqref{eq:expansion-small}.
\end{proof}

\subsection{The case $m<p-1$}

Let $\lambda\in(0,1)$, and let $w_\lambda$ be the solution of
\begin{equation}\label{eq:w-lambda-large}
\left\{
\begin{aligned}
-\nabla\cdot\bigl(\gamma|\nabla w_\lambda|^{p-2}\nabla w_\lambda\bigr)+Vw_\lambda^m&=0 &&\text{in }\Omega,\\
 w_\lambda&=\lambda^{-1}v &&\text{on }\pr\Omega.
\end{aligned}
\right.
\end{equation}
We make the Ansatz
\begin{equation}\label{eq:ansatz-large}
w_\lambda=\lambda^{-1}\bigl(v+\lambda^{p-1-m}R_\lambda\bigr).
\end{equation}

\begin{prop}\label{prop:large-data}
Assume that $m<p-1$. Then the family $R_\lambda$ is bounded in $C^{1,\beta}(\overline\Omega)$, and there exists $R\in C^{1,\beta}(\overline\Omega)$ such that $R_\lambda\to R$ in $C^1(\overline\Omega)$ as $\lambda\to0^+$. The limit $R$ solves \eqref{eq:R-equation}, and for every $\omega\in C^\infty(\overline\Omega)$,
\begin{multline}\label{eq:expansion-large}
\la\omega,\Lambda_{\gamma,V}(\lambda^{-1}v|_{\pr\Omega})\ra
=\lambda^{1-p}\int_\Omega \gamma|\nabla v|^{p-2}\nabla v\cdot\nabla\omega\,\dd x\\
+\lambda^{-m}\int_\Omega\Bigl(\nabla\omega\cdot A[v]\nabla R+V\omega v^m\Bigr)\,\dd x+o(\lambda^{-m}).
\end{multline}
\end{prop}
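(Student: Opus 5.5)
We mirror the proof of Proposition~\ref{prop:small-data}, after a rescaling. Set $\tilde w_\lambda:=\lambda w_\lambda = v+\lambda^{p-1-m}R_\lambda$. Multiplying \eqref{eq:w-lambda-large} by $\lambda^{p-1}$ and using the $(p-1)$-homogeneity of the flux shows that $\tilde w_\lambda$ solves
\[
-\nabla\cdot\bigl(\gamma|\nabla \tilde w_\lambda|^{p-2}\nabla \tilde w_\lambda\bigr)+\lambda^{p-1-m}V\tilde w_\lambda^m=0 \quad\text{in }\Omega,\qquad \tilde w_\lambda=v \text{ on }\pr\Omega .
\]
This is the equation of Proposition~\ref{prop:elliptic-forward} with potential $\lambda^{p-1-m}V\in(0,\mu']$. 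The constants in \eqref{eq:lieberman-estimate} depend on upper bounds for the lower order term, so Lieberman's estimate applies uniformly for the family. Only the upper bound on the potential matters here: the structure conditions of \cite{Lieberman1988} need just a bound on $|\lambda^{p-1-m}V\tilde w^m|$, and this follows from the maximum principle \eqref{eq:elliptic-max-principle}, which gives $\|\tilde w_\lambda\|_{L^\infty}\le\|v\|_{L^\infty(\pr\Omega)}$. Hence $\tilde w_\lambda$ is bounded in $C^{1,\beta}(\overline\Omega)$. We need $v>0$ on $\pr\Omega$ so that the boundary data lie in the admissible class; this is the standing assumption needed to apply $\Lambda_{\gamma,V}$.

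Next we identify the limit. By Arzel\`a--Ascoli, a subsequence converges in $C^1$ to some $\tilde v$. The zero-order term $\lambda^{p-1-m}V\tilde w_\lambda^m$ tends to $0$ uniformly because $p-1-m>0$. Passing to the limit in the weak formulation, which is legitimate since the flux converges uniformly, shows that $\tilde v$ is weighted $p$-harmonic with trace $v$. Uniqueness then gives $\tilde v=v$. So $\lambda^{p-1-m}R_\lambda\to0$ in $C^1$, and the whole family converges.

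Now we write the equation for $R_\lambda$. We apply \eqref{eq:Taylor-J} with $\xi=\nabla v$ and $\zeta=\nabla v+\lambda^{p-1-m}\nabla R_\lambda$, and set
\[
(A_\lambda)_{jk}=\int_0^1\gamma\,\pr_{\xi_k}J_j\bigl(\nabla v+t\lambda^{p-1-m}\nabla R_\lambda\bigr)\dd t .
\]
This gives $\gamma|\nabla\tilde w_\lambda|^{p-2}\nabla\tilde w_\lambda=\gamma|\nabla v|^{p-2}\nabla v+\lambda^{p-1-m}A_\lambda\nabla R_\lambda$. Since $v$ is weighted $p$-harmonic, dividing by $\lambda^{p-1-m}$ yields
\[
\nabla\cdot(A_\lambda\nabla R_\lambda)=V(v+\lambda^{p-1-m}R_\lambda)^m,\qquad R_\lambda|_{\pr\Omega}=0 .
\]
Because $\nabla v\neq0$ on $\overline\Omega$ and $\lambda^{p-1-m}\nabla R_\lambda\to0$ uniformly, the matrices $A_\lambda$ are uniformly elliptic for small $\lambda$. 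They are also uniformly $C^{0,\beta}$: the $C^{1,\beta}$ bound on $\tilde w_\lambda$ and the smoothness of $J$ away from $0$ control their H\"older norms. The right-hand sides are uniformly bounded. Schauder (or $C^{1,\beta}$ divergence-form) estimates therefore bound $R_\lambda$ in $C^{1,\beta}$. Extracting a subsequence, passing to the limit (with $A_\lambda\to A[v]$ uniformly) and invoking uniqueness for the linear problem \eqref{eq:R-equation} gives $R_\lambda\to R$ in $C^1$ for the full family.

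Finally we derive the expansion. By \eqref{eq:duality-DN},
\[
\la\omega,\Lambda_{\gamma,V}(\lambda^{-1}v)\ra=\lambda^{1-p}\int_\Omega\gamma|\nabla\tilde w_\lambda|^{p-2}\nabla\tilde w_\lambda\cdot\nabla\omega\,\dd x+\lambda^{-m}\int_\Omega V\omega\tilde w_\lambda^m\,\dd x .
\]
Inserting the flux decomposition and using $\lambda^{1-p}\lambda^{p-1-m}=\lambda^{-m}$, $A_\lambda\nabla R_\lambda\to A[v]\nabla R$ uniformly, and $\tilde w_\lambda^m\to v^m$ uniformly yields \eqref{eq:expansion-large}. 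The main obstacle is the uniform $C^{1,\beta}$ control of $R_\lambda$, which needs uniform H\"older regularity of $A_\lambda$; it is resolved by the first step's uniform bound together with the noncriticality of $v$.
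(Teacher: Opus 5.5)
Your proposal is correct and follows essentially the same route as the paper's proof. Like the paper, you rescale to $\lambda w_\lambda$ with potential $\lambda^{p-1-m}V$ to get uniform $C^{1,\beta}$ bounds, identify the limit as $v$ by uniqueness for the weighted $p$-Laplacian, use Taylor's formula to obtain the linear equation for $R_\lambda$, and pass to the limit. Your additional remarks fill in points the paper leaves implicit: Lieberman's estimate needs only an upper bound on the degenerating potential, $v$ must be positive on $\pr\Omega$ for the boundary data to be admissible, and $A_\lambda$ is uniformly H\"older continuous so that Schauder estimates apply.
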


\begin{proof}
Note that
\[
\lambda w_\lambda=v+\lambda^{p-1-m}R_\lambda
\]
solves
\begin{equation}\label{eq:lambda-w-lambda-large}
\left\{
\begin{aligned}
-\nabla\cdot\bigl(\gamma|\nabla(\lambda w_\lambda)|^{p-2}\nabla(\lambda w_\lambda)\bigr)+\lambda^{p-1-m}V(\lambda w_\lambda)^m&=0 &&\text{in }\Omega,\\
 \lambda w_\lambda&=v &&\text{on }\pr\Omega.
\end{aligned}
\right.
\end{equation}
Since the family of potentials $\lambda^{p-1-m}V$ is uniformly bounded in $\lambda$, Proposition~\ref{prop:elliptic-forward} implies that the family
\[
v+\lambda^{p-1-m}R_\lambda=\lambda w_\lambda
\]
is bounded in $C^{1,\beta}(\overline\Omega)$ uniformly in $\lambda$. By Arzel\`a--Ascoli, after passing to a subsequence there exists $\tilde v\in C^{1,\beta}(\overline\Omega)$ such that $\tilde v|_{\pr\Omega}=v|_{\pr\Omega}$ and
\[
v+\lambda^{p-1-m}R_\lambda\to\tilde v
\qquad\text{in }C^1(\overline\Omega)
\]
as $\lambda\to0^+$.

As in the previous case, passing to the limit in \eqref{eq:lambda-w-lambda-large} in the sense of distributions gives
\[
\nabla\cdot\bigl(\gamma|\nabla\tilde v|^{p-2}\nabla\tilde v\bigr)=0
\qquad\text{in }\Omega.
\]
Since $\tilde v$ has the same boundary values as $v$, uniqueness for the weighted $p$-Laplace equation yields $\tilde v=v$. Therefore
\[
\lambda^{p-1-m}R_\lambda\to0
\qquad\text{in }C^1(\overline\Omega),
\]
and again the limit is independent of the subsequence.

Applying Taylor's formula \eqref{eq:Taylor-J} as above, we find that
\[
\gamma|\nabla w_\lambda|^{p-2}\nabla w_\lambda
=\lambda^{1-p}\gamma|\nabla v|^{p-2}\nabla v+\lambda^{-m}\widetilde A_\lambda\nabla R_\lambda,
\]
where
\[
(\widetilde A_\lambda)_{jk}=\int_0^1\gamma\,\pr_{\xi_k}J_j\bigl(\nabla v+t\lambda^{p-1-m}\nabla R_\lambda\bigr)\,\dd t.
\]
It follows that
\[
\nabla\cdot(\widetilde A_\lambda\nabla R_\lambda)=V\bigl(v+\lambda^{p-1-m}R_\lambda\bigr)^m
\qquad\text{in }\Omega,
\]
with $R_\lambda|_{\pr\Omega}=0$. Since $\lambda^{p-1-m}R_\lambda\to0$ in $C^1(\overline\Omega)$ and $\nabla v$ has no zeros, the coefficient matrices $\widetilde A_\lambda$ are uniformly elliptic independently of $\lambda$ for all sufficiently small $\lambda$. Consequently $R_\lambda$ is bounded in $C^{1,\beta}(\overline\Omega)$ uniformly in $\lambda$, and passing to the limit yields $R\in C^{1,\beta}(\overline\Omega)$ with $R_\lambda\to R$ in $C^1(\overline\Omega)$ and
\[
\nabla\cdot\bigl(A[v]\nabla R\bigr)=Vv^m
\qquad\text{in }\Omega,
\qquad
R|_{\pr\Omega}=0.
\]
By uniqueness, the full family converges.

Finally, using the weak definition of $\Lambda_{\gamma,V}$ together with the flux expansion above and the uniform convergence
\[
w_\lambda^m=\lambda^{-m}\bigl(v+\lambda^{p-1-m}R_\lambda\bigr)^m
=\lambda^{-m}v^m+o(\lambda^{-m}),
\]
we obtain
\begin{multline*}
\la\omega,\Lambda_{\gamma,V}(\lambda^{-1}v|_{\pr\Omega})\ra
=\lambda^{1-p}\int_\Omega \gamma|\nabla v|^{p-2}\nabla v\cdot\nabla\omega\,\dd x\\
+\lambda^{-m}\int_\Omega\Bigl(\nabla\omega\cdot A[v]\nabla R+V\omega v^m\Bigr)\,\dd x+o(\lambda^{-m}),
\end{multline*}
which is exactly \eqref{eq:expansion-large}.
\end{proof}

\begin{rem}\label{rem:critical}
The borderline case $m=p-1$ is excluded from the present argument. In that regime the two terms in the equation scale in the same way, so the asymptotic expansions above no longer separate the conductivity from the lower-order term. Different methods would be required.
\end{rem}

\section{Inverse problems for the elliptic equation}\label{sec:inverse-elliptic}

\subsection{Recovery of the weighted $p$-Laplacian Dirichlet-to-Neumann map}

Let $\Lambda_\gamma$ denote the Dirichlet-to-Neumann map for the weighted $p$-Laplace equation \eqref{eq:p-laplace}. We recall the following result from \cite{CarFe3}.

\begin{thm}[\cite{CarFe3}]\label{thm:CarFe}
Let $\gamma,\tilde\gamma\in C^\infty(\Omega)$ be strictly positive.
\begin{enumerate}
\item If $n=2$ and $\Lambda_\gamma=\Lambda_{\tilde\gamma}$, then $\gamma=\tilde\gamma$.
\item If $n\ge 3$ and there exists a nonzero vector $\xi\in\R^n$ such that
\[
\xi\cdot\nabla\gamma=\xi\cdot\nabla\tilde\gamma=0
\qquad \text{in }\Omega,
\]
and $\Lambda_\gamma=\Lambda_{\tilde\gamma}$, then $\gamma=\tilde\gamma$.
\end{enumerate}
\end{thm}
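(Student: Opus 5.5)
This statement is quoted from \cite{CarFe3}, so within the paper it is used as a black box. If I were to prove it, my plan would be to reduce to a \emph{linear} Calder\'on problem by first-order linearization of $\Lambda_\gamma$ at a noncritical solution. Let $v\in C^\infty(\overline\Omega)$ solve \eqref{eq:p-laplace} with $\nabla v\neq0$ on $\overline\Omega$. Since $\nabla v$ does not vanish, the operator $w\mapsto\nabla\cdot(\gamma|\nabla w|^{p-2}\nabla w)$ is smooth near $v$ in $C^{1,\beta}(\overline\Omega)$. The implicit function theorem, with Schauder theory for the linearized operator, then gives solutions $v_s$ with $v_s|_{\pr\Omega}=v|_{\pr\Omega}+sf$ for small $s$. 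Differentiating $\la\Lambda_\gamma(v|_{\pr\Omega}+sf),h\ra$ at $s=0$ (the Taylor formula \eqref{eq:Taylor-J} makes this rigorous) yields the Dirichlet-to-Neumann map of $\nabla\cdot(A[v]\nabla\,\cdot)$, with $A[v]$ as in \eqref{eq:A-v}. One point needs care: $\Lambda_\gamma=\Lambda_{\tilde\gamma}$ must give the same background solution, or at least the same boundary data with noncritical solutions for both conductivities. I would handle this by choosing background data for which noncriticality is automatic.

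\textbf{Case $n\ge3$.} After an orthogonal change of variables I may take $\xi=e_n$. Then $v=x_n$ solves \eqref{eq:p-laplace} for both $\gamma$ and $\tilde\gamma$, since $\nabla\cdot(\gamma e_n)=\pr_n\gamma=0$, and it is noncritical. Hence $v$ and $\tilde v$ coincide and $\Lambda_\gamma=\Lambda_{\tilde\gamma}$ can be linearized at the same background. Here $A[x_n]=\gamma\,\mathrm{diag}(1,\dots,1,p-1)$, so I recover the linear Dirichlet-to-Neumann maps of $\nabla\cdot(\gamma D\nabla\cdot)$ and $\nabla\cdot(\tilde\gamma D\nabla\cdot)$ with the constant matrix $D=\mathrm{diag}(1,\dots,1,p-1)$, and these maps agree. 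The linear change of variables $x_n\mapsto x_n/\sqrt{p-1}$ turns both into isotropic conductivities $c\,\gamma$ and $c\,\tilde\gamma$ on the same transformed smooth domain, with a common positive constant $c$. The boundary data transform accordingly, so the transformed Dirichlet-to-Neumann maps still coincide. Sylvester--Uhlmann uniqueness then gives $\gamma=\tilde\gamma$. Boundary determination is not needed separately here.

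\textbf{Case $n=2$.} I would again linearize at a common noncritical background: a solution for $\gamma$ whose boundary data also produce a noncritical solution for $\tilde\gamma$. Since $\Lambda_\gamma=\Lambda_{\tilde\gamma}$, the boundary values and normal fluxes of $v$ and $\tilde v$ coincide. By Astala--Lassas--P\"aiv\"arinta, equality of the anisotropic linear Dirichlet-to-Neumann maps gives $A[\tilde v]=F_*A[v]$ for a diffeomorphism $F$ that is the identity on $\pr\Omega$.

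The main obstacle is to show that $F=\mathrm{id}$. I would try to exploit that the nonlinear map gives a whole family $v_s$ of backgrounds. The variation $\pr_sv_s$ solves the linearized equation, and by the first-order relation $\pr_s\tilde v_s=(\pr_sv_s)\circ F^{-1}$. I would then argue that $F$ can be chosen independently of $s$, and that $\tilde v=v\circ F^{-1}$, comparing the structure of $A[v]$ (eigenvector $\nabla v$ with eigenvalue ratio $p-1$) with that of $F_*A[v]$. Simple connectivity would let me introduce stream functions (the $p'$-Laplace conjugate equation) to rigidify $F$. Finally, I would compare determinants, $\det A[v]=(p-1)\gamma^2|\nabla v|^{2(p-2)}$, to force $F$ to be an isometry that is the identity on the boundary, hence $F=\mathrm{id}$, and read off $\gamma=\tilde\gamma$.
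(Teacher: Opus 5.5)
The paper does not prove this theorem. It is quoted from \cite{CarFe3} and used as a black box, so your sketch can only be judged on its own terms. Your $n\ge3$ argument is sound. The function $x_n$ is a common noncritical solution, and linearizing there gives the conductivities $\gamma D$ and $\tilde\gamma D$, with $D=\mathrm{diag}(1,\dots,1,p-1)$, having equal Dirichlet-to-Neumann maps. The stretch $x_n\mapsto x_n/\sqrt{p-1}$ turns these into $\sqrt{p-1}\,\gamma$ and $\sqrt{p-1}\,\tilde\gamma$, and \cite{SyUh} concludes. This is also the mechanism the paper itself uses in Proposition~\ref{prop:recover-V-nd}.

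In the case $n=2$ there is a genuine gap.

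\emph{The missing common background.} You do not say where a boundary datum that is noncritical for both conductivities comes from. In this paper that is the Alessandrini--Sigalotti result of Remark~\ref{rem:63}, for simply connected $\Omega$.

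\emph{The closing step fails.} The decisive step $F=\mathrm{id}$ is only gestured at, and the route you propose does not work. In two dimensions $\det(F_*A)\circ F=\det A$ holds for \emph{every} diffeomorphism $F$. Comparing determinants therefore only gives $(\tilde\gamma|\nabla\tilde v|^{p-2})\circ F=\gamma|\nabla v|^{p-2}$ and says nothing about $F$. Nor is $F$ an isometry in general at this stage; what can be proved is that it is conformal.

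\emph{The missing idea is homogeneity.}
\begin{enumerate}
\item Since $A[v]\nabla v=(p-1)\gamma|\nabla v|^{p-2}\nabla v$, the background $v$ itself solves $\nabla\cdot(A[v]\nabla\,\cdot)=0$. This is just your relation $\pr_s\tilde v_s=(\pr_sv_s)\circ F^{-1}$ with $f=v|_{\pr\Omega}$; no $s$-independence of $F$ is needed.
\item Both $v\circ F^{-1}$ and $\tilde v$ solve the same linear Dirichlet problem for $\tilde A=F_*A$, so $\tilde v=v\circ F^{-1}$, i.e.\ $\nabla\tilde v\circ F=DF^{-T}\nabla v$.
\item Applying $\tilde A\circ F=DF\,A\,DF^{T}/\det DF$ to this vector shows that $\nabla v$ is an eigenvector of $DF^TDF$.
\item Since $F$ fixes $\pr\Omega$ pointwise, it preserves orientation. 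Write $DF=QS$ with $Q$ a rotation and $S=\mathrm{diag}(a,b)$ in the frame $(\nabla v,\nabla v^\perp)$.
\item The eigenvalue ratio of $F_*A$ along $(Q\nabla v,Q\nabla v^\perp)$ is $(p-1)a^2/b^2$. The ratio of $\tilde A$ along the same frame is $p-1$, because $\nabla\tilde v\circ F=a^{-1}Q\nabla v$. Hence $a=b$ and $F$ is conformal.
\item Then $F(z)-z$ is holomorphic and vanishes on $\pr\Omega$, so $F=\mathrm{id}$.
\item Consequently $\tilde v=v$ and $\tilde A=A$, and $\tilde\gamma=\gamma$ because $\nabla v\neq0$.
\end{enumerate}
Stream functions are not needed.
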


The next proposition is the first step in the proof of Theorem~\ref{thm:main-elliptic}.

\begin{prop}\label{prop:recover-gamma}
Assume that $m\neq p-1$ and that
\[
\Lambda_{\gamma,V}=\Lambda_{\tilde\gamma,\tilde V}.
\]
Then the weighted $p$-Laplacian Dirichlet-to-Neumann maps agree on every smooth boundary value that generates noncritical solutions for both conductivities. Consequently,
\begin{enumerate}
\item if $n=2$, then $\gamma=\tilde\gamma$;
\item if $n\ge 3$ and there exists a nonzero vector $\xi\in\R^n$ such that $\xi\cdot\nabla\gamma=\xi\cdot\nabla\tilde\gamma=0$, then $\gamma=\tilde\gamma$.
\end{enumerate}
\end{prop}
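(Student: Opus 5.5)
The plan is to read off the weighted $p$-Laplacian Dirichlet-to-Neumann map from the leading term of the expansions in Propositions~\ref{prop:small-data} and \ref{prop:large-data}, and then quote Theorem~\ref{thm:CarFe}.

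Fix a smooth boundary value $f$ such that the solutions $v$ and $\tilde v$ of \eqref{eq:p-laplace} with Dirichlet data $f$, for $\gamma$ and $\tilde\gamma$ respectively, are smooth and have no critical points in $\overline\Omega$. Since both maps are defined only on strictly positive data, we also need $f>0$. This positivity can be arranged by adding a large constant to $f$: constants solve \eqref{eq:p-laplace}, but the equation is not invariant under adding constants to $v$ unless... In fact it is invariant, because $\nabla(v+c)=\nabla v$. Hence $v+c$ solves the same equation with the same gradient, and we may assume $f>0$.

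In the case $m>p-1$, apply \eqref{eq:expansion-small} to both pairs $(\gamma,V)$ and $(\tilde\gamma,\tilde V)$ with the same $\lambda v|_{\pr\Omega}=\lambda f$. The left-hand sides coincide by hypothesis. Divide by $\lambda^{p-1}$ and let $\lambda\to0^+$. Since $m>p-1$, the $\lambda^m$ term and the $o(\lambda^m)$ remainder vanish after division, and we get
\[
\int_\Omega \gamma|\nabla v|^{p-2}\nabla v\cdot\nabla\omega\,\dd x=\int_\Omega \tilde\gamma|\nabla \tilde v|^{p-2}\nabla \tilde v\cdot\nabla\omega\,\dd x
\quad\text{for all }\omega\in C^\infty(\overline\Omega).
\]
This is exactly $\la\Lambda_\gamma(f),\omega|_{\pr\Omega}\ra=\la\Lambda_{\tilde\gamma}(f),\omega|_{\pr\Omega}\ra$ in weak form. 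The case $m<p-1$ is identical: use \eqref{eq:expansion-large}, multiply by $\lambda^{p-1}$, and note that $\lambda^{p-1-m}\to0$.

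The main obstacle is that Theorem~\ref{thm:CarFe} is stated under the hypothesis $\Lambda_\gamma=\Lambda_{\tilde\gamma}$ on all data, whereas we only obtain equality on the set of data producing noncritical solutions for both conductivities. I would handle this by checking that the proof in \cite{CarFe3} only evaluates the maps on such data. That proof uses asymptotic or linearization arguments around noncritical solutions, together with the boundary-determination and CGO or complex-analytic constructions built on them. In dimension two, noncritical solutions are plentiful; for example, the solution with data $x_1$ is noncritical, by the Alessandrini-type argument for $p$-harmonic functions in simply connected domains. For $n\ge3$, the invariance in the $\xi$ direction provides the solution $x\cdot\xi$, which is noncritical and solves \eqref{eq:p-laplace} for both $\gamma$ and $\tilde\gamma$, together with its small perturbations. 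The set of admissible data is open in $C^{1,\beta}$ around such data, by stability of the $C^{1,\beta}$ estimates \eqref{eq:lieberman-estimate} and uniqueness. So the maps agree on an open neighborhood of these data, which is the input that \cite{CarFe3} requires. Applying Theorem~\ref{thm:CarFe} then yields $\gamma=\tilde\gamma$ in both assertions.
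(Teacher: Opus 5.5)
Your proof is correct and follows the paper's argument: you isolate the leading $\lambda^{p-1}$ (resp.\ $\lambda^{1-p}$) term in \eqref{eq:expansion-small} (resp.\ \eqref{eq:expansion-large}) to get equality of the weighted $p$-Laplacian Dirichlet-to-Neumann maps on data that are noncritical for both conductivities, and then, as the paper does, rely on the fact that the proofs in \cite{CarFe3} use only such data. Your shift by a constant to make the data strictly positive fills in a detail the paper leaves implicit, since $\Lambda_{\gamma,V}$ is only defined on positive data; however, your planar example with data $x_1$ is only guaranteed to be noncritical when $x_1|_{\pr\Omega}$ has a single maximum and a single minimum (e.g.\ for convex $\Omega$), as in Remark~\ref{rem:63}.
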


\begin{proof}
Comparing the leading terms in either \eqref{eq:expansion-small} or \eqref{eq:expansion-large}, we obtain
\[
\int_\Omega\gamma|\nabla v|^{p-2}\nabla v\cdot\nabla\omega\dd x
=
\int_\Omega\tilde\gamma|\nabla \tilde v|^{p-2}\nabla \tilde v\cdot\nabla\omega\dd x,
\]
whenever $v$ and $\tilde v$ solve the weighted $p$-Laplace equations for $\gamma$ and $\tilde\gamma$ with the same boundary value and have no critical points. Thus the nonlinear Dirichlet-to-Neumann maps for the weighted $p$-Laplacian agree on the class of noncritical data. As observed in \cite{CarFe3}, the proofs there use only such data, so Theorem~\ref{thm:CarFe} applies.
\end{proof}
\begin{rem}\label{rem:63}
Before proceeding further, we want to comment briefly on the existence of noncritical solutions to \eqref{eq:p-laplace}. In the $n=2$ case, by \cite[Theorem 5.1]{AlSi}, there exists boundary data $f\in C^\infty(\pr\dom)$, independent of $\gamma$, so that the corresponding solution $v$ of \eqref{eq:p-laplace} has no critical points. In fact, this will happen for any Dirichlet data that oscillates only once, i.e. it has only one local minimum and one local maximum along the boundary. In the $n\geq 3$ case, under the assumptions of Theorem \ref{thm:CarFe}, part 2., the solution with Dirichlet data $f=\xi\cdot x|_{\pr\dom}$ is  $v=\xi\cdot x$, which is noncritical
\end{rem}

\subsection{Linearization at a noncritical solution}

Assume now that $w_0$ is a noncritical solution of \eqref{eq:elliptic}. In the present setting such solutions exist, since the families constructed in Section~\ref{sec:asymptotics} are noncritical for sufficiently small or large parameter. See also Remark \ref{rem:63}. Let $f\in C^\infty(\pr\Omega)$ and let $\tau\in(-\tau_0,\tau_0)$ be small. Denote by $w_\tau$ the solution of
\begin{equation}\label{eq:w-tau}
\left\{
\begin{aligned}
-\nabla\cdot\bigl(\gamma|\nabla w_\tau|^{p-2}\nabla w_\tau\bigr)+Vw_\tau^m&=0 &&\text{in }\Omega,\\
 w_\tau&=w_0+\tau f &&\text{on }\pr\Omega.
\end{aligned}
\right.
\end{equation}
We make the Ansatz
\[
w_\tau=w_0+\tau R_\tau.
\]

\begin{prop}\label{prop:linearization}
The family $R_\tau$ converges in $C^1(\overline\Omega)$ to the unique solution $\dot w$ of
\begin{equation}\label{eq:linearized}
\left\{
\begin{aligned}
-\nabla\cdot\bigl(A[w_0]\nabla \dot w\bigr)+mVw_0^{m-1}\dot w&=0 &&\text{in }\Omega,\\
\dot w&=f &&\text{on }\pr\Omega.
\end{aligned}
\right.
\end{equation}
Moreover, for every $\omega\in C^\infty(\overline\Omega)$,
\begin{multline}\label{eq:expansion-mid}
\la \omega,\Lambda_{\gamma,V}(w_0|_{\pr\Omega}+\tau f)\ra
=
\int_\Omega\gamma|\nabla w_0|^{p-2}\nabla w_0\cdot\nabla\omega\,\dd x+
\int_\Omega V\omega w_0^m\,\dd x\\
+\tau\int_\Omega\Bigl(\nabla\omega\cdot A[w_0]\nabla\dot w+mVw_0^{m-1}\omega\dot w\Bigr)\,\dd x+o(\tau).
\end{multline}
Hence $\Lambda_{\gamma,V}$ determines the Dirichlet-to-Neumann map of the linearized equation \eqref{eq:linearized}.
\end{prop}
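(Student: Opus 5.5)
The argument follows the template of Proposition~\ref{prop:small-data}: first show $C^1$ convergence $w_\tau\to w_0$, then write a linear equation for $R_\tau$ with uniformly elliptic coefficients, obtain Schauder bounds, and pass to the limit. For $|\tau|<\tau_0$ small, the data $w_0|_{\pr\Omega}+\tau f$ is strictly positive, since $w_0|_{\pr\Omega}>0$ is implicit in the setting. It is also bounded in $C^{1,\beta}(\pr\Omega)$. Proposition~\ref{prop:elliptic-forward} then shows that $w_\tau$ is bounded in $C^{1,\beta}(\overline\Omega)$ uniformly in $\tau$. By Arzel\`a--Ascoli, every subsequence has a further subsequence converging in $C^1(\overline\Omega)$ to some $\tilde w$ with boundary value $w_0|_{\pr\Omega}$. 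Passing to the limit in the weak formulation \eqref{eq:weak-elliptic} shows that $\tilde w$ is a weak solution of \eqref{eq:elliptic}, so uniqueness in Proposition~\ref{prop:elliptic-forward} gives $\tilde w=w_0$. Hence $\tau R_\tau\to0$ in $C^1(\overline\Omega)$ along the whole family.

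Next I derive the equation for $R_\tau$. Applying Taylor's formula \eqref{eq:Taylor-J} with $\xi=\nabla w_0$ and $\zeta=\nabla w_0+\tau\nabla R_\tau$ gives
\[
\gamma|\nabla w_\tau|^{p-2}\nabla w_\tau=\gamma|\nabla w_0|^{p-2}\nabla w_0+\tau A_\tau\nabla R_\tau ,
\]
where $A_\tau$ is the averaged matrix as in Section~\ref{sec:asymptotics}. Similarly,
\[
w_\tau^m=w_0^m+\tau c_\tau R_\tau,\qquad c_\tau=m\int_0^1(w_0+s\tau R_\tau)^{m-1}\dd s .
\]
Subtracting the equation for $w_0$ and dividing by $\tau$, we find that $R_\tau$ solves
\[
-\nabla\cdot(A_\tau\nabla R_\tau)+Vc_\tau R_\tau=0 \text{ in } \Omega,\qquad R_\tau=f \text{ on } \pr\Omega .
\]
Since $\nabla w_0$ has no zeros on $\overline\Omega$ and $\tau\nabla R_\tau\to0$ uniformly, the matrices $A_\tau$ are uniformly elliptic and have uniformly bounded $C^{0,\beta}$ norms for small $\tau$. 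Likewise $c_\tau\ge0$ is bounded: the data is positive and $w_0$ is bounded below by a positive constant (by the strong comparison principle, or by the construction of noncritical positive solutions), so $w_0+s\tau R_\tau$ stays away from $0$ even when $m<1$. Global $C^{1,\beta}$ Schauder estimates for divergence-form equations, combined with the maximum principle (which applies because $Vc_\tau\ge0$) to control $\|R_\tau\|_{L^\infty}$, then bound $R_\tau$ in $C^{1,\beta}(\overline\Omega)$. By compactness and uniqueness for \eqref{eq:linearized}, which holds by the Lax--Milgram theorem since $mVw_0^{m-1}\ge0$, we get $R_\tau\to\dot w$ in $C^1(\overline\Omega)$ with $A_\tau\to A[w_0]$ and $c_\tau\to mw_0^{m-1}$.

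Finally, insert the two expansions into Definition~\ref{def:DN} with the extension $\widetilde h=\omega$. The zeroth-order terms give the first line of \eqref{eq:expansion-mid}. The first-order term is
\[
\tau\int_\Omega\bigl(\nabla\omega\cdot A_\tau\nabla R_\tau+V c_\tau R_\tau\omega\bigr)\dd x ,
\]
and by the uniform convergences just established it equals the stated $\tau$-term plus $o(\tau)$. Consequently $\Lambda_{\gamma,V}$ determines the bilinear form
\[
\int_\Omega\bigl(\nabla\omega\cdot A[w_0]\nabla\dot w+mVw_0^{m-1}\omega\dot w\bigr)\dd x
\]
for all $f$ and $\omega$. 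This form is exactly the weak Dirichlet-to-Neumann map of \eqref{eq:linearized}.

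The main obstacle is the uniform $C^{1,\beta}$ bound for $R_\tau$. It requires first the qualitative $C^1$ convergence $w_\tau\to w_0$, to keep $A_\tau$ uniformly elliptic near a noncritical $w_0$, and also a positive lower bound for $w_0$ to control $c_\tau$ when $m<1$. To get this lower bound I would first try comparison with the constant subsolution $0$ together with a Hopf/Harnack-type argument for $w_0$. Alternatively, I would simply restrict to positive boundary data $w_0|_{\pr\Omega}\ge\delta>0$ and use a barrier.
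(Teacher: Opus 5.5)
Your proposal is correct and follows the paper's proof essentially step by step: uniform $C^{1,\beta}$ bounds plus uniqueness give $\tau R_\tau\to0$ in $C^1(\overline\Omega)$, a Taylor expansion yields a uniformly elliptic linear equation for $R_\tau$, then Schauder bounds, compactness and uniqueness for \eqref{eq:linearized}, and finally the expansion of the weak Dirichlet-to-Neumann map. The positive lower bound on $w_0$ that you flag for $m<1$ (which the paper passes over) follows most cleanly from noncriticality, since an interior zero of $w_0\ge0$ would be an interior minimum and hence a critical point; the strong maximum principle, Harnack or barrier routes you suggest can fail when $m<p-1$, because dead cores occur in that regime.
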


\begin{proof}
Since $w_\tau=w_0+\tau f$ on $\pr\Omega$, the family $\tau R_\tau$ has uniformly bounded boundary values. By Proposition~\ref{prop:elliptic-forward} applied to \eqref{eq:w-tau}, the family $\tau R_\tau=w_\tau-w_0$ is uniformly bounded in $C^{1,\beta}(\overline\Omega)$. Reasoning as before we can quickly obtain that $\tau R_\tau\to 0$ in $C^1(\overline\Omega)$.

As in the proof of Proposition~\ref{prop:small-data}, the equation for $w_\tau$ may be rewritten as
\[
-\nabla\cdot\bigl(A_\tau\nabla R_\tau\bigr)+\mathcal V_\tau R_\tau=0
\qquad\text{in }\Omega,
\]
with boundary value $R_\tau|_{\pr\Omega}=f$, where
\[
(A_\tau)_{jk}=\gamma\int_0^1 \partial_{\xi_k}J_j\bigl(\nabla w_0+t\tau\nabla R_\tau\bigr)\,\dd t,
\qquad
\mathcal V_\tau=mV\int_0^1 (w_0+t\tau R_\tau)^{m-1}\,\dd t.
\]
Because $\nabla w_0$ has no zeros and $\tau R_\tau\to 0$ in $C^1(\overline\Omega)$, the matrices $A_\tau$ satisfy ellipticity bounds that are uniform for small $\tau$. Likewise, the coefficients $\mathcal V_\tau$ are uniformly bounded in $L^\infty(\Omega)$. An  application of standard Schauder estimates therefore yields a uniform $C^{1,\beta}(\overline\Omega)$ bound for $R_\tau$. Passing to a further subsequence, we may assume that $R_\tau\to \dot w$ in $C^1(\overline\Omega)$.

The convergences $A_\tau\to A[w_0]$ and $\mathcal V_\tau\to mVw_0^{m-1}$ in $C(\overline\Omega)$ then imply that $\dot w$ solves \eqref{eq:linearized}. Since the latter equation has a unique solution, the whole family $R_\tau$ converges to $\dot w$ in $C^1(\overline\Omega)$.

Finally, by the weak definition of the Dirichlet-to-Neumann map and the same Taylor expansion used above,
\begin{align*}
\la \omega,\Lambda_{\gamma,V}(w_0|_{\pr\Omega}+\tau f)\ra
&=\int_\Omega \gamma J(\nabla w_\tau)\cdot\nabla\omega\,\dd x+\int_\Omega V\omega w_\tau^m\,\dd x\\
&=\int_\Omega \gamma|\nabla w_0|^{p-2}\nabla w_0\cdot\nabla\omega\,\dd x+\int_\Omega V\omega w_0^m\,\dd x\\
&\quad+\tau\int_\Omega\Bigl(\nabla\omega\cdot A[w_0]\nabla\dot w+mVw_0^{m-1}\omega\dot w\Bigr)\,\dd x+o(\tau),
\end{align*}
which is exactly \eqref{eq:expansion-mid}. The coefficient of $\tau$ is the weak Dirichlet-to-Neumann map of \eqref{eq:linearized}.
\end{proof}

\subsection{Recovery of $V$ in the plane}

We now assume that $n=2$ and that $\Omega$ is simply connected. By Proposition~\ref{prop:recover-gamma}, we may suppose that $\gamma=\tilde\gamma$. We choose a noncritical solution $w_0$ for the common conductivity, and denote by $\tilde w_0$ the corresponding solution for the coefficient $\tilde V$ with the same boundary values.

\begin{prop}\label{prop:recover-V-2d}
Assume that $n=2$, that $\Omega$ is simply connected, and that
\[
\gamma=\tilde\gamma,
\qquad
\Lambda_{\gamma,V}=\Lambda_{\gamma,\tilde V}.
\]
Then $V=\tilde V$ in $\Omega$.
\end{prop}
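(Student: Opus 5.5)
The plan is to reduce the problem to the two-dimensional anisotropic Calder\'on problem with a potential, using Proposition~\ref{prop:linearization}, and then to remove the diffeomorphism gauge by using the special structure of the linearized coefficients. First, I fix a strictly positive boundary value $g=w_0|_{\pr\Omega}$ for which $w_0$ (the solution for $(\gamma,V)$) is noncritical, for instance from the family of Section~\ref{sec:asymptotics} built on a noncritical $p$-harmonic $v$ as in Remark~\ref{rem:63}. Let $\tilde w_0$ be the solution for $(\gamma,\tilde V)$ with the same data. I also want $\tilde w_0$ to be noncritical. This should be arranged by taking the asymptotic parameter small (case $m>p-1$) or large (case $m<p-1$): Propositions~\ref{prop:small-data} and \ref{prop:large-data} show that both rescaled solutions are $C^1$-close to the same $v$, because $\gamma=\tilde\gamma$. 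Positivity of $w_0,\tilde w_0$ comes from the maximum principle.

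Second, since $\Lambda_{\gamma,V}=\Lambda_{\gamma,\tilde V}$ on all data $g+\tau f$, Proposition~\ref{prop:linearization} gives equality of the Dirichlet-to-Neumann maps of
\[
L=-\nabla\cdot(A[w_0]\nabla\,\cdot)+mVw_0^{m-1},\qquad
\tilde L=-\nabla\cdot(A[\tilde w_0]\nabla\,\cdot)+m\tilde V\tilde w_0^{m-1}.
\]
Both operators are uniformly elliptic anisotropic conductivity equations with bounded potentials on a simply connected planar domain. The result of \cite{CarLiiTzo} then provides a diffeomorphism $\Phi:\overline\Omega\to\overline\Omega$ with $\Phi|_{\pr\Omega}=\mathrm{Id}$ such that
\[
A[\tilde w_0]=\Phi_*A[w_0]=\frac{D\Phi\,A[w_0]\,D\Phi^T}{|\det D\Phi|}\circ\Phi^{-1},\qquad
m\tilde V\tilde w_0^{m-1}=\frac{mVw_0^{m-1}}{|\det D\Phi|}\circ\Phi^{-1}.
\]

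Third, and this is the main obstacle, I must show $\Phi=\mathrm{Id}$, or at least $\tilde w_0=w_0$. The matrix $A[w_0]$ has eigenvector $\nabla w_0$ with eigenvalue $(p-1)\gamma|\nabla w_0|^{p-2}$ and orthogonal eigenvalue $\gamma|\nabla w_0|^{p-2}$. Since $p\neq2$, these are distinct, so the anisotropy ratio $p-1$ is fixed and the pushforward relation is rigid. In two dimensions $\det\Phi_*A=\det A\circ\Phi^{-1}$, so the first relation yields $|\nabla\tilde w_0|=|\nabla w_0|\circ\Phi^{-1}$ up to the ratio $\gamma/\gamma\circ\Phi^{-1}$. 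It also implies that $D\Phi$ maps the eigenframe $(\nabla w_0,\nabla w_0^\perp)$ to a frame proportional to $(\nabla\tilde w_0,\nabla\tilde w_0^\perp)$, with conformality constraints; thus $\Phi$ is quasiconformal with prescribed Beltrami coefficient in terms of $\nabla w_0$. I would try first to combine this with the fact that the linearized solutions transform as $\tilde u=u\circ\Phi^{-1}$. In addition, the same construction applied to the whole one-parameter family of noncritical backgrounds $w_0^{(s)}$, with data $g+sf$, gives a family $\Phi_s$ that depends continuously on $s$ and equals the identity on the boundary. Differentiating in $s$, or comparing with $\gamma$, which is the same for both problems, should force $\gamma\circ\Phi^{-1}=\gamma$-compatibility and then $\Phi=\mathrm{Id}$. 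If a cleaner route is needed, I would use the boundary determination of $w_0$ and of $\pr_\nu w_0$ (equal Cauchy data) together with unique continuation for the rigid first-order system satisfied by $\Phi$.

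Finally, once $\Phi=\mathrm{Id}$, we get $A[\tilde w_0]=A[w_0]$, hence $|\nabla\tilde w_0|=|\nabla w_0|$ and $\nabla\tilde w_0=\pm\nabla w_0$. By continuity and the common boundary values, $\tilde w_0=w_0$. Subtracting the two equations then gives $(V-\tilde V)w_0^m=0$, and since $w_0>0$ we conclude $V=\tilde V$. Alternatively, the potential relation with $\det D\Phi=1$ gives this directly.
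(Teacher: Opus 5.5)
There is a genuine gap: Step 3, removing the diffeomorphism gauge, is not carried out, and Step 2 also overstates what the linearized map gives.

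\textbf{Step 2 claims too much.} Your reduction in Steps 1--2 follows the paper, and arranging for $\tilde w_0$ to be noncritical too is a good point. But the Dirichlet-to-Neumann map of $-\nabla\cdot(A\nabla\,\cdot)+q$ is invariant under more than boundary-fixing pushforwards. It is also invariant under the scalar gauge $(A,q)\mapsto\bigl(\sigma A,\ \sigma q-\sigma^{1/2}\nabla\cdot(A\nabla\sigma^{1/2})\bigr)$, for any $\sigma>0$ with $\sigma=1$ and $\pr_\nu\sigma=0$ on $\pr\Omega$ (substitute $u=\sigma^{-1/2}v$). In the paper's Liouville reduction, the scalar part of $A[w_0]$ drops out of the metric and enters only the potential. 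Moreover, \cite{CarLiiTzo} determines the metric only up to a boundary-fixing diffeomorphism \emph{and} a conformal factor.

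What you can extract about the principal part is therefore only a relation for the normalized matrix
$(\det A[w_0])^{-1/2}A[w_0]=\frac{1}{\sqrt{p-1}}\bigl(I+(p-2)\hat n\otimes\hat n\bigr)$, with $\hat n=\nabla w_0/|\nabla w_0|$. This sees only the \emph{direction} of $\nabla w_0$; the factor $\gamma|\nabla w_0|^{p-2}$ is lost. Several of your steps rest on information that is not available:
the relation $A[\tilde w_0]=\Phi_*A[w_0]$;
the consequence $|\nabla\tilde w_0|=|\nabla w_0|\circ\Phi^{-1}$;
the idea of ``comparing with $\gamma$'';
and your last step $\Phi=\mathrm{Id}\Rightarrow|\nabla\tilde w_0|=|\nabla w_0|\Rightarrow\tilde w_0=w_0$.
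Also, even with your potential relation, $\Phi=\mathrm{Id}$ only gives $\tilde V\tilde w_0^{m-1}=Vw_0^{m-1}$. That yields $V=\tilde V$ only once $\tilde w_0=w_0$ is known.

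\textbf{Step 3 is missing.} Removing the gauge is the whole content of the proposition beyond the citation, and you only list strategies that ``should'' work. The Beltrami-type system for $\Phi$ involves the unknown direction field of $\nabla\tilde w_0$, so it is not closed. The identity solves it only if the two direction fields already agree, so boundary unique continuation would be circular.

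The paper instead uses the metric $g[w_0]=(\det A[w_0])^{1/2}A[w_0]^{-1}=\sqrt{p-1}\bigl(I+\frac{2-p}{p-1}\hat n\otimes\hat n\bigr)$. This metric has determinant one and fixed eigenvalues $(p-1)^{\pm1/2}$. From this the paper argues that the boundary-fixing diffeomorphism is the identity, and so obtains only $\nabla\tilde w_0/|\nabla\tilde w_0|=\nabla w_0/|\nabla w_0|$.

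It then needs an ingredient absent from your plan. Since $\Omega$ is simply connected and $w_0$ is noncritical, no level curve of $w_0$ closes up inside $\Omega$, because it would enclose an interior extremum. So every level curve reaches $\pr\Omega$, where $w_0=\tilde w_0$. The two functions share level curves, hence coincide, and $V=\tilde V$ follows from the equation since $w_0>0$. This, not the application of \cite{CarLiiTzo}, is where simple connectivity enters.

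Be aware that the rigidity step needs real work even on the paper's route. With $E=\mathrm{diag}(\sqrt{p-1},1/\sqrt{p-1})$, every $N=E^{-1/2}RE^{1/2}$ with $R\in SO(2)$, $R\neq\pm I$, satisfies $N^{T}EN=E$ without being orthogonal. So unit determinant and constant spectrum do not by themselves force $D\Psi$ to be orthogonal pointwise, and the conformal factor left by \cite{CarLiiTzo} must also be tracked. Whatever you supply there has to use more than pointwise linear algebra.
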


\begin{proof}
By Proposition~\ref{prop:linearization}, the nonlinear map $\Lambda_{\gamma,V}$ determines the Dirichlet-to-Neumann map of the linearized equation
\[
-\nabla\cdot(A[w_0]\nabla \dot w)+mVw_0^{m-1}\dot w=0.
\]
Since
\[
\det A[w_0]=(p-1)\gamma^2|\nabla w_0|^{2(p-2)},
\]
we introduce the Riemannian metric
\[
g[w_0]=(\det A[w_0])^{1/2}A[w_0]^{-1}
=\sqrt{p-1}\left(I+\frac{2-p}{p-1}\frac{\nabla w_0\otimes \nabla w_0}{|\nabla w_0|^2}\right).
\]
If we now set
\[
v=(\det A[w_0])^{-1/4}\dot w,
\]
then $v$ solves the Schr\"odinger equation
\begin{equation}\label{eq:surf-schrod}
-\Delta_{g[w_0]}v+Q[w_0,V]v=0
\end{equation}
on the surface $(\Omega,g[w_0])$, where
\[
Q[w_0,V]=(\det A[w_0])^{-1/4}\Bigl[\Delta_{g[w_0]}(\det A[w_0])^{1/4}+mVw_0^{m-1}\Bigr].
\]
The same construction applied to $\tilde V$ gives an analogous equation involving $\tilde w_0$, $g[\tilde w_0]$, and $Q[\tilde w_0,\tilde V]$. Since $w_0|_{\pr\Omega}=\tilde w_0|_{\pr\Omega}$ and $\nabla w_0|_{\pr\Omega}=\nabla \tilde w_0|_{\pr\Omega}$ are known from the nonlinear boundary data, Proposition~\ref{prop:linearization} shows that the equality of nonlinear Dirichlet-to-Neumann maps yields equality of the Dirichlet-to-Neumann maps for the two Schr\"odinger equations. Therefore \cite[Theorem 1.1]{CarLiiTzo} gives a boundary-fixing diffeomorphism $\Psi:\Omega\to\Omega$ such that
\begin{equation}\label{eq:transformation-g}
g[\tilde w_0]=\Psi^*g[w_0]
\end{equation}
and
\[
Q[\tilde w_0,\tilde V]=Q[w_0,V]\circ\Psi.
\]

Taking determinants in \eqref{eq:transformation-g}, we obtain $\det D\Psi=1$. Both $g[w_0]^{-1}$ and $g[\tilde w_0]^{-1}$ are orthogonally equivalent to the constant matrix
\[
\frac{1}{\sqrt{p-1}}\begin{pmatrix}1&0\\0&p-1\end{pmatrix}.
\]
It follows that $D\Psi$ maps one orthogonal basis of $\R^2$ to another, and hence $D\Psi$ is orthogonal. Since $\Psi$ fixes the boundary pointwise, we conclude that $\Psi=\mathrm{Id}$. Thus
\[
g[w_0]=g[\tilde w_0]
\qquad\text{and hence}\qquad
\frac{\nabla \tilde w_0}{|\nabla \tilde w_0|}=\frac{\nabla w_0}{|\nabla w_0|}.
\]

Because $\Omega$ is simply connected, no level set of $w_0$ can be a closed curve contained strictly inside $\Omega$: such a curve would bound a topological disc, and the normalized gradient field $\nabla w_0/|\nabla w_0|$ would then have to vanish somewhere inside by the Brouwer fixed point theorem, contradicting the noncriticality of $w_0$. Hence every level set of $w_0$ meets the boundary. By rotating the normalized gradient field by an angle of $\pi/2$, we obtain a smooth tangent field for the level sets, and the identity above shows that $w_0$ and $\tilde w_0$ have the same level curves. Since these level curves start on the boundary, where $w_0$ and $\tilde w_0$ agree, it follows that $w_0=\tilde w_0$ in $\Omega$. Returning to \eqref{eq:elliptic-main}, we conclude that $V=\tilde V$.
\end{proof}

\subsection{Recovery of $V$ in higher dimensions}

Assume now that $n\ge 3$ and that $\gamma=\tilde\gamma$ is invariant in one direction. After an orthogonal change of variables, we may suppose that
\begin{equation}\label{eq:directional-invariance}
\pr_n\gamma=0.
\end{equation}

\begin{prop}\label{prop:recover-V-nd}
Assume that $n\ge 3$, that \eqref{eq:directional-invariance} holds, and that
\[
\gamma=\tilde\gamma,
\qquad
\Lambda_{\gamma,V}=\Lambda_{\gamma,\tilde V}.
\]
Then $V=\tilde V$ in $\Omega$.
\end{prop}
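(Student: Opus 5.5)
The plan is to exploit the explicit noncritical solution available when $\gamma$ is invariant in $x_n$. By Proposition~\ref{prop:recover-gamma} we already know $\gamma=\tilde\gamma$. The affine function $v_t=x_n+t$ satisfies $\nabla\cdot(\gamma|\nabla v_t|^{p-2}\nabla v_t)=\pr_n\gamma=0$. For $t$ large, $v_t$ is positive on $\overline\Omega$ and has no critical points, so it is admissible in Section~\ref{sec:asymptotics}. In this case
\[
A[v_t]=\gamma\bigl(I+(p-2)e_n\otimes e_n\bigr)=:A
\]
is known and independent of $t$.

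\textbf{Step 1: the first-order identity.} I would compare the $\lambda^m$ (respectively $\lambda^{-m}$) coefficients in \eqref{eq:expansion-small} or \eqref{eq:expansion-large} for $V$ and $\tilde V$. Integrating by parts with \eqref{eq:R-equation}, the $\lambda^m$ coefficient equals $\int_{\pr\Omega}\omega\,\nu\cdot A\nabla R\,dS$. Equality of the maps therefore gives, for $\rho=R-\tilde R$,
\[
\nabla\cdot(A\nabla\rho)=(V-\tilde V)v_t^m \ \text{ in }\Omega,\qquad \rho|_{\pr\Omega}=0,\qquad \nu\cdot A\nabla\rho|_{\pr\Omega}=0.
\]
Testing against solutions of $\nabla\cdot(A\nabla\psi)=0$ yields
\[
\int_\Omega (V-\tilde V)\,v_t^m\,\psi\,dx=0
\]
for all such $\psi$ and all large $t$.

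\textbf{Step 2: a product identity.} A single family of solutions is not dense, so I need products. The natural source is the mixed regime: apply Proposition~\ref{prop:linearization} at the background $w_0=w_\lambda$ from Proposition~\ref{prop:small-data}, then expand the resulting linearized Dirichlet-to-Neumann map in $\lambda$. After dividing by $\lambda^{p-2}$, the linearized operator is
\[
-\nabla\cdot(A\nabla\cdot)+\lambda^{m-p+1}\bigl(-\nabla\cdot(DA[v_t](\nabla R)\nabla\cdot)+mVv_t^{m-1}\bigr)+o(\lambda^{m-p+1}).
\]
The leading part is known, so comparing the $\lambda^{m-p+1}$ coefficients gives, for all $A$-harmonic $\psi_1,\psi_2$,
\[
\int_\Omega m(V-\tilde V)v_t^{m-1}\psi_1\psi_2\,dx+\int_\Omega DA[v_t](\nabla\rho)\nabla\psi_1\cdot\nabla\psi_2\,dx=0.
\]

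\textbf{Step 3: removing the $\rho$-term.} Since $v_t=x_n+t$, the term $DA[v_t](\nabla\rho)$ is an explicit linear expression in $\nabla\rho$ with coefficients depending only on $\gamma$ and $p$. By Step~1, $\rho$ has zero Cauchy data, so I would integrate by parts to move all derivatives off $\rho$. This gives a form $\int_\Omega\rho\,B(\psi_1,\psi_2)\,dx$ with $B$ bilinear and second order, and no boundary terms. I would then write $\rho=G\bigl[(V-\tilde V)v_t^m\bigr]$, where $G$ is the Dirichlet Green operator of $\nabla\cdot(A\nabla\cdot)$. Alternatively, and I expect more cleanly, I would use the $t$-dependence: differentiating the Step~1 identity and the product identity in $t$ should produce a closed system in $F=V-\tilde V$.

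\textbf{Step 4: density of products.} Rescale $y_n=x_n/\sqrt{p-1}$. Because $\pr_n\gamma=0$, this turns $\nabla\cdot(A\nabla\cdot)$ into $\nabla\cdot(\gamma\nabla\cdot)$ times a constant. The Liouville substitution $\psi=\gamma^{-1/2}\phi$ then gives $-\Delta\phi+q\phi=0$ with $q=\gamma^{-1/2}\Delta\gamma^{1/2}$. Since $n\ge3$, Sylvester--Uhlmann complex geometrical optics solutions $\phi_{1,2}=e^{x\cdot\zeta_{1,2}}(1+r_{1,2})$ with $\zeta_1+\zeta_2=-i\kappa$ make products $\psi_1\psi_2$ dense, and their gradient products are controlled at order $|\zeta|^2$. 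I would insert these into the identity of Step~3, normalize by the leading power of $|\zeta|$, and let $|\zeta|\to\infty$. The intended outcome is that the Fourier transform of $F\,v_t^{m-1}\cdot(\text{nonvanishing weight})$ vanishes, hence $F=0$.

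The main obstacle is Step~3. The term involving $\nabla\rho$ carries the same number of $\zeta$-factors as the zeroth-order term, or more, so it cannot simply be treated as negligible. I would first try to choose the CGO phases with $\zeta_j\cdot e_n$ arranged so that the leading contraction of $DA[v_t]$ with $\zeta_1\otimes\zeta_2$ vanishes. This is possible because $DA[v_t](\eta)$ has a rank structure aligned with $e_n$, and the remaining lower-order pieces can then be absorbed using the zero Cauchy data of $\rho$ from Step~1.
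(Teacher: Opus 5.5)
Your Steps 1 and 2 are sound. Step 1 is \eqref{eq:S-equation}--\eqref{eq:integral-S}. The product identity of Step 2 is, for $A$-harmonic test functions, exactly the identity \eqref{eq:integral-identity} that the paper works with (with $S=\rho$, $q=F$, $\{\omega,\dot v\}=\{\psi_1,\psi_2\}$). The reason is that $a\cdot DA[v](c)\,b=\gamma\,D^3\bigl(|\xi|^p/p\bigr)\big|_{\xi=\nabla v}[a,b,c]$ is symmetric in $a,b,c$. The paper reaches it more directly, by differentiating \eqref{eq:integral-S} along $p$-harmonic backgrounds $v_\tau=x_n+\tau\dot v+o(\tau)$. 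Your detour through Proposition~\ref{prop:linearization} at $w_\lambda$, followed by a second expansion in $\lambda$, is also legitimate.

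The gap is in Steps 3--4. You try to make the $\rho$-term negligible and read $F$ off $\int mFv_t^{m-1}\psi_1\psi_2$, but that term is two orders smaller and cannot be isolated. Take $\psi_j=\gamma^{-1/2}e^{\zeta_j\cdot x}(1+r_j)$ with $\zeta_j\cdot M\zeta_j=0$, $M=I+(p-2)e_n\otimes e_n$, $\zeta_1+\zeta_2=-i\kappa$ fixed, $\|r_j\|_{L^2}\lesssim|\zeta|^{-1}$ and $\|\nabla r_j\|_{L^2}\lesssim1$. The two isotropy conditions imply $\kappa'\cdot\zeta_1'=-(p-1)\kappa_n\zeta_{1,n}+\Ord(1)$. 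Using this together with $\nabla v_t=e_n$ and $\rho|_{\pr\Omega}=0$, one finds
\[
\int_\Omega DA[v_t](\nabla\rho)\nabla\psi_1\cdot\nabla\psi_2\,\dd x
=2i(p-2)(p-1)\,\kappa_n\,\zeta_{1,n}^2\int_\Omega\rho\,e^{-i\kappa\cdot x}\dd x+\Ord(|\zeta|),
\]
while $\int_\Omega mFv_t^{m-1}\psi_1\psi_2\dd x=\Ord(1)$.

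To kill the $|\zeta|^2$ term you need $\kappa_n=0$ or $\zeta_{1,n}=o(|\zeta|)$. The latter forces the normalized real and imaginary parts of $\zeta_1$ to become orthogonal, of equal length, and orthogonal to both $e_n$ and $\kappa$. For $n=3$ this is impossible unless $\kappa\in\R e_n$, so no open set of frequencies survives. In higher dimensions, what remains of the $\rho$-term (involving the $r_j$ and $\nabla\gamma$) is still not smaller than the $F$-term.

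None of your proposed fixes changes this:
\begin{itemize}
\item The zero Cauchy data of $\rho$ only let you move derivatives without boundary terms; they cannot change the algebraic leading coefficient above.
\item Substituting $\rho=G[Fv_t^m]$ merely rewrites the same $|\zeta|^2$ term.
\item The $t$-differentiation idea is not an argument as stated. For integer $m$ the Step~1 identity is polynomial in $t$ and yields only the moments $\int Fx_n^j\psi$ with $j\le m$.
\end{itemize}

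The missing idea, which is how the paper concludes, is to use the $\rho$-term rather than fight it. Choose phases with $\lim\zeta_{1,n}/|\zeta|\neq0$. For example, take the paper's $\zeta_\pm=\pm s\mu+i(\xi\pm t\eta)$, with $\mu\in\mathrm{span}\{\xi,e_n\}$ $M$-orthogonal to $\xi$ and $\eta\perp\xi,\mu,e_n$; this gives $\mu_n\neq0$ whenever $\xi_n\neq0$ and $\xi\notin\R e_n$. Divide the identity by $|\zeta|^2$ and let $|\zeta|\to\infty$. The $F$-term drops out, and you get $\kappa_n\,\widehat{\rho\mathbf{1}_\Omega}(\kappa)=0$ on an open set of $\kappa$. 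Since $\rho\mathbf{1}_\Omega$ has compact support, its Fourier transform is entire, so $\rho\equiv0$. Your own Step~1 equation $\nabla\cdot(A\nabla\rho)=(V-\tilde V)v_t^m$ with $v_t>0$ then gives $V=\tilde V$. This is also where $p\neq2$ is genuinely used, through the factor $p-2$ in $DA$.
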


\begin{proof}
By Proposition~\ref{prop:recover-gamma}, we may indeed assume that the conductivity is common. Therefore, in the asymptotic expansions of Section~\ref{sec:asymptotics}, we may take the same noncritical weighted $p$-harmonic function $v$ for both coefficients. Let $R$ and $\tilde R$ be the corresponding correction terms and set
\[
S=R-\tilde R,
\qquad
q=V-\tilde V.
\]
Then subtracting the two equations for $R$ and $\tilde R$ gives
\begin{equation}\label{eq:S-equation}
\left\{
\begin{aligned}
\nabla\cdot\bigl(A[v]\nabla S\bigr)&=qv^m &&\text{in }\Omega,\\
S&=0 &&\text{on }\pr\Omega.
\end{aligned}
\right.
\end{equation}
Moreover, comparing the second terms in \eqref{eq:expansion-small} or \eqref{eq:expansion-large} yields
\begin{equation}\label{eq:integral-S}
\int_\Omega\Bigl(\nabla S\cdot A[v]\nabla\omega+q\omega v^m\Bigr)\,\dd x=0
\end{equation}
for every smooth $\omega$.

We now perturb the background solution $v$. Let $v_0$ be a noncritical weighted $p$-harmonic function, let $f\in C^\infty(\pr\Omega)$, and let
\[
v_\tau=v_0+\tau\dot v+o(\tau)
\]
be the linearization of the weighted $p$-Laplace equation, as in \cite{CarFe3}. Choosing $v=v_\tau$ in \eqref{eq:integral-S}, differentiating at $\tau=0$, and integrating by parts in the term involving $\dot S$ gives
\begin{multline}\label{eq:integral-identity}
0=\int_\Omega \nabla \dot S \cdot A[v_0]\nabla\omega+\nabla S \cdot \dot A[v_0,\dot v]\nabla\omega+mqv_0^{m-1}\omega \dot v\,\dd x\\
=\int_\Omega mqv_0^{m-1}\omega \dot v\,\dd x-\int_\Omega S\,\nabla \cdot \bigl(\dot A[v_0,\dot v]\nabla\omega\bigr)\,\dd x,
\end{multline}
where $\dot S=\left.\frac{\dd}{\dd\tau}\right|_{\tau=0}S$ and
\begin{multline*}
\dot A[v_0,\dot v]=(p-2)\gamma|\nabla v_0|^{p-4}\Big[(\nabla v_0\cdot\nabla \dot v)I\hspace{7em}\\
+(p-4)(\nabla v_0\cdot\nabla \dot v)\frac{\nabla v_0\otimes\nabla v_0}{|\nabla v_0|^2}+\nabla v_0\otimes\nabla \dot v+\nabla \dot v\otimes\nabla v_0\Big].
\end{multline*}

We now choose
\[
v_0(x)=x_n.
\]
Because of \eqref{eq:directional-invariance}, this function is weighted $p$-harmonic. We then have
\[
A[v_0]=\gamma\bigl(I+(p-2)e_n\otimes e_n\bigr),
\]
which is the isotropic conductivity $\gamma I$ stretched in the direction $e_n$. This allows us to use the classical Sylvester--Uhlmann construction \cite{SyUh} to produce complex geometrical optics solutions for the operator $\nabla\cdot(A[v_0]\nabla\cdot)$. More precisely, if $\zeta\in\C^n$ satisfies
\begin{equation}\label{eq:CGO-condition}
\zeta\cdot\bigl(I+(p-2)e_n\otimes e_n\bigr)\zeta=0,
\end{equation}
and $|\zeta|$ is large enough, then there exists a solution of the form
\[
\omega_\zeta=\gamma^{-1/2}e^{\zeta\cdot x}(1+r_\zeta),
\qquad
\|r_\zeta\|_{H^s(\Omega)}\le \frac{C}{|\zeta|},
\quad s>\frac n2.
\]

Let $\xi,\eta,\mu\in\R^n\setminus\{0\}$ and $t,s\in\R$ be such that $\eta\perp\xi,\mu,e_n$, $|\eta|=|\mu|=1$, and $\xi$, $\mu$, and $e_n$ are coplanar. Define
\[
\zeta_\pm=\pm s\mu+i(\xi\pm t\eta).
\]
A direct computation gives
\begin{multline*}
\zeta_\pm\cdot\bigl(I+(p-2)e_n\otimes e_n\bigr)\zeta_\pm
=s^2-|\xi|^2-t^2\pm2is\mu\cdot\xi\\
+(p-2)\Big[s^2(\mu\cdot e_n)^2-(\xi\cdot e_n)^2\pm2is(\mu\cdot e_n)(\xi\cdot e_n)\Big].
\end{multline*}
Hence \eqref{eq:CGO-condition} is satisfied provided that
\begin{equation}\label{eq:angle-condition}
\mu\cdot\xi+(p-2)(\mu\cdot e_n)(\xi\cdot e_n)=0
\end{equation}
and
\begin{equation}\label{eq:s-condition}
s^2\Bigl[1+(p-2)(\mu\cdot e_n)^2\Bigr]=t^2+|\xi|^2\Bigl[1+(p-2)\frac{(\xi\cdot e_n)^2}{|\xi|^2}\Bigr].
\end{equation}
The identity \eqref{eq:angle-condition} is an orthogonality condition with respect to the inner product defined by $I+(p-2)e_n\otimes e_n$. Given any $\xi\in\R^n\setminus\R e_n$, we may choose $\mu\in\mathrm{span}\{\xi,e_n\}$ so that \eqref{eq:angle-condition} holds, and then choose $\eta$ orthogonal to the plane of $\xi$, $\mu$, and $e_n$. Finally, \eqref{eq:s-condition} is satisfied by taking
\[
s=\Bigl[1+(p-2)(\mu\cdot e_n)^2\Bigr]^{-1/2}
\Bigl[t^2+|\xi|^2+(p-2)(\xi\cdot e_n)^2\Bigr]^{1/2},
\]
with $t$ as a large parameter.

We now use in \eqref{eq:integral-identity} the choices $\omega=\omega_{\zeta_+}$ and $\dot v=\omega_{\zeta_-}$. Keeping only the highest-order terms in $t$, we compute
\begin{multline*}
\nabla\cdot\Bigl(\dot A[x_n,\omega_{\zeta_-}]\nabla\omega_{\zeta_+}\Bigr)
=e^{2i\xi\cdot x}\,2i(p-2)\xi\cdot\Bigl[-s(e_n\cdot\mu)(s\mu+it\eta)\hspace{7em}\\
-(p-4)s^2(e_n\cdot\mu)^2e_n-(s^2-t^2)e_n-s(e_n\cdot\mu)(s\mu+it\eta)\Bigr]+\Ord(t)\\
=-e^{2i\xi\cdot x}\,2i(p-2)\Bigl[2s^2(e_n\cdot\mu)(\xi\cdot\mu)
+(e_n\cdot\xi)\bigl(s^2(1+(p-4)(e_n\cdot\mu)^2)-t^2\bigr)\Bigr]+\Ord(t)\\
=-e^{2i\xi\cdot x}\,2i(p-2)(e_n\cdot\xi)
\Bigl[s^2\bigl(1-p(e_n\cdot\mu)^2\bigr)-t^2\Bigr]+\Ord(t)\\
=e^{2i\xi\cdot x}\frac{4i(p-2)(p-1)(e_n\cdot\mu)^2}{1+(p-2)(e_n\cdot\mu)^2}t^2+\Ord(t),
\end{multline*}
where we used the identity $\xi\cdot\mu=-(p-2)(e_n\cdot\mu)(e_n\cdot\xi)$ coming from \eqref{eq:angle-condition}. On the other hand,
\[
\int_\Omega mqx_n^{m-1}\omega_{\zeta_-}\omega_{\zeta_+}\,\dd x=\Ord(1)
\qquad\text{as }t\to\infty.
\]
Therefore \eqref{eq:integral-identity} implies that $\widehat S(2\xi)=0$ for an open set of $\xi$. Since $S$ is compactly supported in $\Omega$, analytic continuation yields $S\equiv0$. Returning to \eqref{eq:S-equation}, we obtain $q=0$, that is, $V=\tilde V$.
\end{proof}

We may now complete the proof of of our result in the elliptic case.

\begin{proof}[Proof of Theorem~\ref{thm:main-elliptic}]
By Proposition~\ref{prop:recover-gamma}, the nonlinear Dirichlet-to-Neumann map determines the conductivity $\gamma$ under the assumptions of the theorem. Once $\gamma$ is known, Proposition~\ref{prop:recover-V-2d} yields recovery of $V$ in the planar case, and Proposition~\ref{prop:recover-V-nd} yields recovery of $V$ in dimensions $n\ge 3$ under the one-directional invariance assumption.
\end{proof}

\section{The parabolic inverse problem}\label{sec:inverse-parabolic}

We now deduce the parabolic uniqueness theorem from the elliptic one.

\begin{proof}[Proof of Theorem~\ref{thm:main-parabolic}]
Assume that
\[
\mathcal C^{\mathrm{lat}}_{\epsilon,\gamma}=\mathcal C^{\mathrm{lat}}_{\tilde\epsilon,\tilde\gamma}
\]
and let
\[
V=\frac{m}{m-p+1}\epsilon,
\qquad
\tilde V=\frac{m}{m-p+1}\tilde\epsilon.
\]
Since $m>p-1$, Corollary~\ref{cor:parabolic-to-elliptic} shows that
\[
\Lambda_{\gamma,V}=\Lambda_{\tilde\gamma,\tilde V}.
\]
We now apply Theorem~\ref{thm:main-elliptic}.

If $n=2$ and $\Omega$ is simply connected, then Theorem~\ref{thm:main-elliptic} yields
\[
\gamma=\tilde\gamma,
\qquad
V=\tilde V
\qquad \text{in }\Omega.
\]
Using \eqref{eq:V-from-epsilon}, we obtain
\[
\epsilon=\tilde\epsilon
\qquad \text{in }\Omega.
\]

If $n\ge 3$ and there exists a nonzero vector $\xi\in\R^n$ such that
\[
\xi\cdot\nabla\gamma=\xi\cdot\nabla\tilde\gamma=0
\qquad \text{in }\Omega,
\]
then Theorem~\ref{thm:main-elliptic} again gives
\[
\gamma=\tilde\gamma,
\qquad
V=\tilde V
\qquad \text{in }\Omega,
\]
and therefore \eqref{eq:V-from-epsilon} implies
\[
\epsilon=\tilde\epsilon
\qquad \text{in }\Omega.
\]
This proves both assertions of Theorem~\ref{thm:main-parabolic}.
\end{proof}

%

\section*{Acknowledgements}
C.C. was supported by NSTC grant  113-2115-M-A49-018-MY3. T.G. was supported by grant number NBHM(R.P)/R\&D II/9464.

\bibliographystyle{plain}
\bibliography{nonlinearity_updated_v11}

\end{document}